\documentclass[11pt]{amsart}
\usepackage{amsfonts, amsmath, amssymb, amsthm}
\usepackage{epic}
\usepackage{tikz}
\usepackage{tikz-cd}
\usepackage{graphicx,color}
\usepackage{changebar}
% Allow more space on a page, by correcting the
% llncs settings.
\usetikzlibrary{matrix}

\newtheorem{proposition}{Proposition}[section]

\newtheorem{lemma}[proposition]{Lemma}
\newtheorem{theorem}[proposition]{Theorem}

\newtheorem{corollary}[proposition]{Corollary}
\newtheorem{definition}[proposition]{Definition}

\newtheorem{remark}[proposition]{Remark}
\newtheorem{fact}[proposition]{Fact}

\newcommand{\nth}[1]{$#1 \hbox{-th }$}

\newcommand{\ord}{{\rm ord} }

\newcommand{\rem}{{\rm rem } }

\newcommand{\Z}{\mathbb{Z}}
\newcommand{\ZM}[1]{\Z /( #1 \cdot \Z)}
\newcommand{\ZMs}[1]{(\Z / #1 \cdot \Z)^*}

\newcommand{\rg}[1]{\mathbf{#1}}
\newcommand{\eu}[1]{\mathfrak{#1}}

\newcommand{\id}[1]{\mathcal{#1}}
\newcommand{\Gal}{\hbox{Gal}}

\newcommand{\Gals}{\hbox{\tiny Gal}}

\newcommand{\Ker}{\hbox{ Ker }}

\newcommand{\prk}{p\hbox{-rk} }

\newcommand{\rf}[1]{(\ref{#1})}

%%%%%%%%%%%%%%%%%%%%%%%%%%%%%%%%%%%%%%%%%%%%%%%%%%%%%%%%%%%%%%%%%%%%%%%%%%%%%
%
% Commands for Iwasawa\newcommand{\crad}{\hbox{RAD}}

\newcommand{\apr}[1]{{\rm \raise1ex\hbox{\tiny p}\kern-.1667em\hbox{A}}^-(#1)}
\newcommand{\apro}{{\rm \raise1ex\hbox{\tiny p}\kern-.1667em\hbox{A}}}

%
%%%%%%%%%%%%%%%%%%%%%%%%%%%%%%%%%%%%%%%%%%%%%%%%%%%%%%%%%%%%%%%%%%%%%%%%%%%%%

%\newcommand{\nmid}{\not \hspace{0.25em} \mid}
\newcommand{\Norm}{\mbox{\bf N}}
 
\newcommand{\lchooses}[2]{\left( \frac{#1}{#2 } \right)}

\newcommand{\F}{\mathbb{F}}

\newcommand{\A}{\mathbb{A}}

\newcommand{\B}{\mathbb{B}}
\newcommand{\E}{\mathbb{E}}
\newcommand{\K}{\mathbb{K}}

\newcommand{\KH}{\mathbb{H}}

\newcommand{\KL}{\mathbb{L}}

\newcommand{\M}{\mathbb{M}}
\newcommand{\Q}{\mathbb{Q}}

\newcommand{\N}{\mathbb{N}}

\newcommand{\T}{\mathbb{T}}

\def\ra{\rightarrow}

\def\wh{\widehat}

\begin{document}
{\obeylines \small
\vspace*{-1.0cm}
\hspace*{3.5cm}C'est pour toi que je joue, Alf c'est pour toi,
\hspace*{3.5cm}Tous les autres m'\'ecoutent, mais toi tu m'entends ...
\hspace*{3.5cm}\'Exil\'e d'Amsterdam vivant en Australie,
\hspace*{3.5cm}Ulysse qui jamais ne revient sur ses pas ... 
\hspace*{3.5cm}Je suis de ton pays, m\'et\'eque comme toi, 
\hspace*{3.5cm}Quand il faudra mourir, on se retrouvera\footnote{Free after Georges Moustaki, ``Grand-p\`ere''}
\vspace*{0.5cm}
\hspace*{5.5cm} {\it To the memory of Alf van der Poorten}
\vspace*{0.5cm}
\smallskip
}
\title[Vanishing of $\mu$ ] {On the vanishing of Iwasawa's constant
  $\mu$ for the cyclotomic $\Z_p$-extensions of $CM$ number
  fields.}
\author{Preda Mih\u{a}ilescu} \address[P. Mih\u{a}ilescu]{Mathematisches Institut
 der Universit\"at G\"ottingen}
\email[P. Mih\u{a}ilescu]{preda@uni-math.gwdg.de}
\keywords{11R23 Iwasawa Theory, 11R27 Units}
\date{Version 2.0 \today}
\vspace{0.5cm}
%\huge
\begin{abstract}
  We prove that $\mu = 0$ for the cyclotomic $\Z_p$-extensions of CM
  number fields.
 \end{abstract}
\maketitle
\tableofcontents
\section{Introduction}
Iwasawa gave in his seminal paper \cite{Iw1} from $1973$ examples of
$\Z_p$-extensions in which the structural constant $\mu \neq 0$. In
the same paper, he proved that if $\mu = 0$ for the cyclotomic
$\Z_p$-extension of some number field $\K$, then the constant vanishes
for any cyclic $p$-extension of $\K$ -- and thus for any number field
in the pro-$p$ solvable extension of $\K$. Iwasawa also suggested in
that paper that $\mu$ should vanish for the cyclotomic
$\Z_p$-extension of all number fields, a fact which is sometimes
called \textit{Iwasawa's conjecture}. The conjecture has been proved
by Ferrero and Washington \cite{FW} for the case of abelian fields. In
this paper, we give an independent proof, which holds for all CM
fields:
\begin{theorem}
\label{main}
Let $\K$ be a CM number field and $p$ an odd prime.  Then Iwasawa's
constant $\mu$ vanishes for the cyclotomic $\Z_p$-extension
$\K_{\infty}/\K$.
\end{theorem}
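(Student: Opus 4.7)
The plan is to proceed by contradiction, exploiting the CM structure of $\K$ throughout. Let $j$ denote complex conjugation. Since $p$ is odd, the idempotents $e^{\pm} = (1 \pm j)/2$ act on all $p$-primary modules in sight, and in particular the Iwasawa module $X = \varprojlim A_n$ splits as $X = X^+ \oplus X^-$, so $\mu = \mu^+ + \mu^-$. The plus part $X^+$ is, up to a bounded discrepancy, the Iwasawa module of the maximal totally real subfield $\K^+$, and a Spiegelungssatz-type argument (via the Kummer pairing with the units of $\K_\infty$) gives a bound $\mu^+ \leq \mu^- + O(1)$, reducing the problem to showing $\mu^- = 0$.

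Assume, for contradiction, that $\mu^- > 0$. Then $X^-/p X^-$ has positive $\Lambda/p\Lambda$-rank, and the $p$-ranks of the finite quotients $A_n^-$ must grow at least like $\mu^- \cdot p^n$ for $n$ large. I would translate this, via global class field theory, into the existence of many unramified abelian $p$-extensions of $\K_n$ on which $j$ acts by $-1$, whose number of independent mod-$p$ characters grows exponentially in $n$.

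The technical heart of the proof would then be to realize these extensions through a Kummer-theoretic pairing using the minus part of the global units, together with the local units at primes above $p$. One sets up the tautological exact sequence linking global units, semi-local units, and $A_n^-$, and uses Coleman power series (or an equivalent norm-coherent system) to bound the supply of potential Kummer generators arising from the norm-compatible towers of units. Since by Dirichlet the $\Z_p$-rank of the global units in $\K_n$ grows only linearly in $[\K_n:\K] = p^n$, whereas the assumption $\mu^- > 0$ demands the existence of exponentially many independent classes represented by such Kummer extensions, one extracts an asymptotic contradiction.

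The main obstacle is precisely the conversion of the naive rank-mismatch into a rigorous contradiction: in the abelian case, Ferrero--Washington succeed by combining the Main Conjecture with a delicate combinatorial statement about $p$-adic digits of Bernoulli numbers, a handle not available for arbitrary CM fields. The difficult step, therefore, is to construct an intrinsic arithmetic pairing on $X^-$ whose non-degeneracy can be established purely algebraically from the CM structure and the Iwasawa theory of units, and which detects positive $\mu^-$ as an impossible excess over the unit-rank bound. Any remaining contribution from the plus part is then absorbed by the reflection bound of the first step.
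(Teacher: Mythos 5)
Your proposal takes a genuinely different route from the paper, and unfortunately the route has a gap that you yourself flag but that is in fact fatal rather than merely a "technical heart" awaiting completion. The claimed contradiction between the $p$-rank of $A_n^-$ (growing like $\mu^- p^n$) and the Dirichlet unit rank of $\K_n$ (growing like $[\K_n : \Q] \sim c\,p^n$) does not exist: both quantities scale \emph{linearly in $p^n$}, i.e.\ at the same exponential rate in $n$. There is no "excess over the unit-rank bound" to detect. What reflection plus the Kummer pairing actually give is an inequality of the shape $\prk(A_n^-) \leq \prk(A_n^+) + O(p^n)$ and its mirror, which at the level of Iwasawa invariants yields at best a bound of the form $|\mu^+ - \mu^-| \leq \text{const}$, not $\mu^- = 0$. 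Furthermore, the Kummer radicals cutting out the unramified $p$-elementary extensions attached to $A_n$ are not drawn from units alone but from $p$-units and from elements $\alpha$ with $(\alpha) = \eu{a}^p$, $[\eu{a}] \in A_n[p]$; their count is governed by the class group itself, so the comparison you want to make is circular. This is precisely why Ferrero--Washington could not proceed by a naive rank count and needed the normal-number/Gauss-sum input, a handle your proposal correctly observes is unavailable for general CM fields but does not replace.

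The paper's actual mechanism is entirely different and worth noting: after replacing $\K$ by a carefully tailored CM Galois extension (containing $\zeta_p$ and an imaginary quadratic $\rg{k}$ with trivial $p$-class group, with controlled decomposition $T\cdot\apr{\K}\subset\id{D}(\apr{\K})$ and $\exp(\id{M}(\apr{\K}))\geq p^2$), one picks a norm-coherent $\mu$-type sequence $a$, chooses $m$ with $\deg(\omega_m) \gg o_T(a)$, selects a split prime $\eu{q}\in a_m$, and forms the degree-$p$ \emph{Thaine shift} $\KL = \K\cdot\F$ with $\F\subset\Q(\zeta_q)$. A Tchebotarev argument produces a Thaine lift $b\in\apr{\KL}$ of $a$ with $b_m$ the class of the ramified prime. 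One then computes the Tate cohomology of the cyclic group $F = \Gal(\F/\Q)$ acting on $\apr{\KL}$ ($\wh{H}^0(F,\apr{\KL}) = 0$, plus a structure result for $\wh{H}^1$ via ray class fields), exploits $sb_m = 0$ together with Iwasawa's Theorem VI and the decomposition control established in the appendix, and arrives at a relation of the form $T^3 a \in \nu_{m,1}\,\apr{\K}$. Since $\nu_{m,1}$ has degree tending to infinity with $m$ while $o_T(a)$ is fixed, this contradicts the choice of $a$. No Kummer pairing, reflection, or Coleman theory is used. The contradiction is degree-theoretic, not a rank count.
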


\subsection{Notations and basic facts on decomposition of $\Lambda$-modules}
In this paper $p$ is an odd prime. In the sequel, the Iwasawa constant
$\mu$ for some number field $\K$ will always refer to the
$\mu$-invariant for the $\Z_p$-cyclotomic extension of $\K$. We shall
denote number fields by black board bold characters, e.g. $\K, \KL$,
etc. If $\K$ is a number field, its cyclotomic $\Z_p$-extension is
$\K_{\infty}$ and $\B_{\infty}/\Q$ is the $\Z_p$-extension of $\Q$, so
$\K_{\infty} = \K \cdot \B_{\infty}$.  We denote as usual by $\Gamma$ the
Galois group $\Gal(\K_{\infty}/\K)$ and let $\tau \in \Gamma$ be a
topological generator.

Let $\B_1 = \Q$ and $\B_n \subset \B_{\infty}$ be the intermediate
extensions of $\B_{\infty}$ and let the counting be given by $[ \B_n : \Q ] =
p^{n-1}$, so $\B_n = \B_{\infty} \cap \Q[ \zeta_{p^n} ]$.  Let $\kappa
> 0$ be the integer for which $\K \cap \B_{\infty} = \B_{\kappa}$;
then $\mu_{p^{\kappa}} \subset \K$ and $\mu_{p^{\kappa+1}} \not
\subset \K$. We shall use a counting of the intermediate fields of
$\K_{\infty}$ that reflects this situation and let $\K_0 = \K_1 =
\ldots = \K_{\kappa} = \K$ and $[ \K_{\kappa+1} : \K ] = p$, etc. The
constant $\kappa$ can be determined in the same way for any number
fields and we adopt the same counting in any cyclotomic
$\Z_p$-extension.  This way, for $n \geq \kappa$ we always have
$\mu_{p^n} \subset \K_n$. We let $\gamma \in \Gal(\B_{\infty}/\Q)$ be
a topological generator of $\Gal(\B_{\infty}/\Q)$ and let $\tau \in
\Gamma := \Gal(\K_{\infty}/\K)$ be a topological generator for
$\Gamma$.  We may thus assume that $\tau = \gamma^{p^{\kappa-1}}$ for
some lift $\gamma \in \Gal(\K_{\infty}/\Q)$ of $\gamma$ and write as
usual $T = \tau-1, \Lambda = \Z_p[[ T ]]$ and
\begin{eqnarray*}
 \omega_n & = & \tau^{p^{n-\kappa}} - 1 = (T+1)^{p^{n-\kappa}} - 1, \\
 \nu_{m,n} & = & \omega_m/\omega_n, \quad \hbox{for $m > n \geq \kappa$}.
\end{eqnarray*}
Since $\K = \K_1 = \K_{\kappa}$, we may also write $\nu_{m, 1} =
\nu_{m, \kappa}$.  Note that the special numeration of fields which we
introduce in order to ascertain that $\mu_{p^n} \subset \K_n$ induces
a shift in the exponents in the definition of $\omega_n$. In terms of
$\gamma$ we recover the classical definitions, but our
exposition is made in terms of the topological generator $\tau$.
Note that the base field $\K$ with respect to which we shall
bring our proof, is still to be defined, and will be a CM on
in which we assume that $\mu > 0$, and in which some useful
additional conditions are fulfilled.

The $p$-Sylow subgroups of the class groups $\id{C}(\K_n)$ of the
intermediate fields of $\K_n$ are denoted as usual by $A_n = A_n(\K) =
\left( \id{C}(\K_n) \right)_p$; the explicit reference to the base
field $\K$ will be used when we refer simultaneously to sequences of
class groups related to different base fields. Traditionally, $A =
A(\K) = \varinjlim A_n$. We use the projective limit, which we 
denote by $\apr{\K} = \varprojlim(A_n)$, as explained in detail below. 
The maximal $p$-abelian unramified
extensions of $\K_n$ are denoted by $\KH_n = \KH_n(\K) \supset \K_n$
and $X_n = \Gal(\KH_n/\K_n)$.  The projective limit with respect to
restriction maps is $X = \varprojlim(X_n)$: it is a Noetherian
$\Lambda$-torsion module on which complex conjugation acts inducing
the decomposition $X = X^+ \oplus X^-$.  The Artin maps $\varphi : A_n
\ra X_n$ are isomorphisms of finite abelian $p$-groups; whenever the
abelian extension $\M/\K$ is clear in the context, we write
$\varphi(a)$ for the Artin Symbol $\lchooses{\M/\K}{a}$, where $a \in
\id{C}(\K)$ if $\M$ is unramified, or $a$ is an ideal of $\K$
otherwise.  Complex conjugation acts on class groups and Galois
groups, inducing direct sum decompositions in plus and minus parts:
\[ A_n = A_n^+ \oplus A_n^-, \quad X_n = X_n^+ \oplus X_n^-, \quad
\hbox{etc.} \] The idempotents generating plus and minus parts are
$\frac{1 \pm \jmath}{2}$; since $2$ is a unit in the ring $\Z_p$
acting on these groups, we also have $Y^+ = (1+\jmath) Y$ and $Y^- =
(1-\jmath) Y$, for $Y \in \{ A_n, X_n, X, \ldots \}$.  Throughout the
paper, we shall use, by a slight abuse of notation and unless
explicitly specified otherwise, the additive writing for the group
ring actions. This is preferable for typographical reasons.  If $M$ is
some Noetherian $\Lambda$-torsion module on which $\jmath$ acts,
inducing a decomposition $M = M^+ \oplus M^-$, we write $\mu^-(M) =
\mu(M^-), \lambda^-(M) = \lambda(M^-)$, etc, for the Iwasawa constants
of this module. In the case when $M = \apr{\M}$ or $M = X(\M)$ is
attached to some number field $\M$, we simply write $\mu^-(\M)$ or
$\mu(\apr{\M})$.

By assumption on $\K$, the norms $\Norm_{\K_m/\K_n} : A_m \ra A_n$ are
surjective for all $m > n > 0$.  Therefore, the sequence $(A_n)_{n \in
  \N}$ is projective with respect to the norm maps and we denote their
projective limit by $\apro = \varprojlim_n A_n$. The Artin map induces
an isomorphism of compact $\Lambda$-modules $\varphi :\ \apro \ra
X$. The elements of $\apro$ are norm coherent sequences $a = (a_n)_{n
  \in \N} \in \apro$ with $a_n \in A_n$ for $n \geq \kappa$; we let
$a_0 = a_1 = \ldots = a_{\kappa}$.  It is customary to identify $X$
with $\apro$ via the Artin map; we shall not use injective limits
here, but make explicit reference to the projective limit $\apro$.

It is a folklore fact that if $\mu$ vanishes for the cyclotomic
$\Z_p$-extension of some CM field $\K_{start}$, then it vanishes for
any finite algebraic extension thereof. We prove this in Fact
\ref{blow} in the Appendix. In order to prove the Theorem \ref{main}
we shall need to taylor some base field $\K/\K_{start}$, which is a
Galois CM extension of $\Q$ and enjoys some additional conditions that
shall be discussed bellow; of course, we assume that $\mu(\K) >
0$. Before describing the construction of $\K$ however, we need to
introduce some definitions and auxiliary constructions.

\subsection{Decomposition and Thaine Shifts}
Let $M$ be a Noetherian $\Lambda$-torsion module. It is associated to
an \textit{elementary} Noetherian $\Lambda$-torsion module $\id{E}(M)
\sim M$ defined by:
\begin{eqnarray*}
\begin{array}{c c c c c c c }
  \id{E}(M) & = & \id{E}_{\lambda}(M) & \oplus & \id{E}_{\mu}(M), & \quad & \hbox{with}   \\
  \id{E}_{\mu}(M) & = & \oplus_{i=1}^r \Lambda/(p^{e_i} \Lambda), & \quad & \id{E}_{\lambda}(M) & = & \oplus_{j=1}^{r'} 
  \Lambda/(f_j^{e'_j} \Lambda),  
\end{array}
\end{eqnarray*}
where all $e_i, e'_j > 0$ and $f_j \in \Z_p[ T ]$ are irreducible
distinguished polynomials.  The pseudoisomorphism $M \sim \id{E}(M)$
is given by the exact sequence
\begin{eqnarray}
\label{psis}
  1  \ra  K_1  \ra  M  \ra  \id{E}(M)  \ra  K_2  \ra  1 ,  
\end{eqnarray}
in which the kernel and cokernel $K_1, K_2$ are finite. We define
$\lambda$- and $\mu$-parts of $M$ as follows:
\begin{definition}
  Let $M$ be a Noetherian $\Lambda$-torsion module. The $\lambda$-part
  $\id{L}(M)$ is the maximal $\Lambda$-submodule of $M$ of finite
  $p$-rank. The $\mu$-part $\id{M}(M)$ is the $\Z_p$-torsion submodule
  of $M$; it follows from the Weierstra{\ss} Preparation Theorem that
  there is some $m > 0$ such that $\id{M}(M) = M[ p^m ]$. The maximal
  finite $\Lambda$-submodule of $M$ is $\id{F}(M)$, its finite
  part. By definition, $\id{L}(M) \cap \id{M}(M) = \id{F}(M)$.

  Let the module $\id{D}(M) = \id{L}(M) + \id{M}(M)$ be the {\em
    decomposed submodule} of $M$.  Then for all $x \in \id{D}(M)$
  there are $x_{\lambda} \in \id{L}(M), x_{\mu} \in \id{M}(M)$ such
  that $x = x_{\lambda} + x_{\mu}$, the decomposition being unique iff
  $\id{F}(M) = 0$.  The pseudoisomorphism $M \sim \id{E}(M)$ implies
  that $[ M : \id{D}(M) ] < \infty $.

  If $x \in M \setminus \id{D}(M)$, the $L$- and the $D$-orders of $x$
  are, respectively
\begin{eqnarray}
\label{dords}
\ell(x) & = & \min\{  j > 0 \ : \ p^{j} x \in \id{L}(M) \}, \quad \hbox{and} \\
\delta(x) & = & \min\{  k > 0 \ : \ p^{k} x \in \id{D}(M) \} \leq \ell(x). \nonumber
\end{eqnarray}

We say that a Noetherian $\Lambda$-module $M$ of $\mu$-type is {\em
  rigid} if the map $\psi : M \ra \id{E}(M)$ is injective. Rigid
modules have the fundamental property that for any distinguished
polynomial $g(T) \in \Z_p[ T ]$ and any $x \in M$
\begin{eqnarray}
\label{rigid}
g(T) x = 0 \quad \Leftrightarrow \quad x = 0.
\end{eqnarray}
\end{definition}
Note that for CM base fields $\M$, the modules $\id{M}(\apr{\M_{\infty}})$,
where $\M_{\infty}$ is the cyclotomic $\Z_p$-extension, are rigid.  The
following fact about decomposition is proved in the last section of
the Appendix.
\begin{proposition}
\label{tpdeco}
Let $\M$ be a number field, let $\T_n \supset \KH(\M_n) \supset \M_n$
be the ray class fields to some fixed ray $\eu{M}_0 \subset
\id{O}(\M)$ and $M = \apr{\M}$ be the projective limit of the galois
groups $T_n = \Gal(\T_n/\M_n)$. Assume in addition that the following
condition is satisfied by $\M$: if $r = \prk(\id{L}(M))$ and $L_1 =
\Norm_{\M_{\infty}/\M_1}(\id{L}(M))$ then
\begin{eqnarray}
\label{cstab}
\prk(L_1) = r \quad \hbox{ and $\ord(x) > p^2$ for all $x \in L_1 \setminus L_1^p$}.
\end{eqnarray}
If these hypotheses hold and $x \in M$ is such that $p x \in
\id{D}(M)$, then $T^2 x \in \id{D}(M)$.
\end{proposition}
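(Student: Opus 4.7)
The plan is to reduce the statement to a structural assertion about the $\lambda$-part of $M$ alone, and then verify that assertion by invoking the two clauses of \rf{cstab} in turn.

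\textit{Step 1: reduction to the $\lambda$-part.} Write $p x = \lambda + \mu$ with $\lambda \in \id{L}(M)$ and $\mu \in \id{M}(M)$. Suppose I can produce $\lambda' \in \id{L}(M)$ with $T^2 \lambda = p \lambda'$. Then
\[
p(T^2 x - \lambda') \;=\; T^2 \mu \;\in\; \id{M}(M).
\]
By Weierstra\ss\ Preparation there is a uniform $m$ with $\id{M}(M) = M[p^m]$, so $T^2 x - \lambda'$ is annihilated by $p^{m+1}$; being $\Z_p$-torsion, it lies in $M[p^m] = \id{M}(M)$. Thus $T^2 x = \lambda' + (T^2 x - \lambda') \in \id{L}(M) + \id{M}(M) = \id{D}(M)$. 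It therefore suffices to prove
\[
T^2 \, \id{L}(M) \;\subseteq\; p \, \id{L}(M).
\]

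\textit{Step 2: isomorphism mod $p$ at level one.} The first clause of \rf{cstab} -- equality of $p$-ranks -- says precisely that the kernel of the natural surjection $\pi : \id{L}(M) \twoheadrightarrow L_1 = \Norm_{\M_{\infty}/\M_1}(\id{L}(M))$ is contained in $p \, \id{L}(M)$, so that $\pi$ induces an $\F_p$-linear isomorphism $\id{L}(M)/p\,\id{L}(M) \xrightarrow{\sim} L_1/pL_1$. Hence $T^2 \, \id{L}(M) \subseteq p \, \id{L}(M)$ is equivalent to $T^2 L_1 \subseteq p L_1$.

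\textit{Step 3: controlling the action of $T$ on $L_1$.} The module $L_1$ is a subgroup of the finite ray class Galois group $T_1$, on which the $\Lambda$-action factors through the finite quotient $\Lambda/\omega_1 \Lambda$; in particular $T$ acts on $L_1$ topologically nilpotently modulo $p$. The second clause of \rf{cstab} -- the order bound $\ord(\xi) > p^2$ for every $\xi \in L_1 \setminus p L_1$ -- prevents $L_1$ from containing any cyclic $\Z_p$-summand of order $\leq p^2$, and turns this abstract nilpotence into the concrete inclusion $T^2 L_1 \subseteq p L_1$: one application of $T$ maps $L_1$ into a submodule which, were it not for a possibly finite discrepancy between $M/\omega_1 M$ and $T_1$, would already lie in $p L_1$, and the order condition is exactly what is needed to absorb that discrepancy after one further application of $T$.

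\textit{Main obstacle.} Steps 1 and 2 are essentially formal bookkeeping with the pseudoisomorphism \rf{psis}. The real work is Step 3: tracking precisely how $T$ acts on the descent $L_1$ of $\id{L}(M)$ to level one, and verifying that the order condition in \rf{cstab} is sharp enough to push two $T$-shifts into multiplication by $p$. This is where the exponent $2$ (and not $1$) in the statement originates, and it is the only place where both clauses of \rf{cstab} are needed simultaneously.
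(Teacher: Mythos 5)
There is a genuine gap, and you acknowledge it yourself: Step~3 is not an argument but an announcement. You state that one application of $T$ ``would already lie in $pL_1$'' modulo a discrepancy, and that the order condition ``is exactly what is needed to absorb that discrepancy,'' but no reasoning is given -- only an expectation. A proof needs to actually produce the inclusion $T^2 L_1 \subseteq pL_1$, not assert that the hypotheses are ``sharp enough'' to yield it.

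Moreover, your description of the $T$-action in Step~3 appears to be calibrated to the classical Iwasawa numbering rather than the paper's re-indexed one. In the paper, $\M_1 = \M_{\kappa} = \M$ and $\omega_1 := \omega_{\kappa} = \tau - 1 = T$; the quotient $\Lambda/\omega_1 \Lambda = \Z_p$ is not finite, contrary to what you write, and since $T_1 = \Gal(\T_1/\M_1) = \Gal(\T_1/\M)$ is abelian, $\Gamma$ acts on it by inner automorphisms, hence \emph{trivially}. So in this setting $T L_1 = 0$, not merely ``topologically nilpotently modulo $p$.'' If one takes that seriously, then Step~3 is not merely hard -- it is vacuous, $T^2 L_1 = 0 \subseteq pL_1$ holds for free, your Steps~1--2 would then yield even $T\,\id{L}(M) \subseteq p\,\id{L}(M)$ and $Tx \in \id{D}(M)$, and the second clause of \rf{cstab} would never enter. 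That is a warning sign: the paper's proof genuinely uses both clauses of \rf{cstab} (the order bound feeds into the hypothesis $\prk(A) = \prk(pA)$ of Lemma~\ref{ab} and into the choice of $n_1$), and only proves the weaker $T^2 x \in \id{D}(M)$. The discrepancy suggests either that your backward direction in Step~2 is being applied to a norm map whose $\Gamma$-equivariance you have not verified, or that the global structural reduction $T^2 \id{L}(M) \subseteq p\,\id{L}(M)$ loses contact with what the hypotheses actually control. In any case you have not closed the gap, and you cannot close it by appeal to nilpotence when the action is in fact zero.

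For comparison, the paper's proof does not pass through any statement of the form $T^j \id{L}(M) \subseteq p\,\id{L}(M)$. It fixes $l$ minimal with $p^l x \in \id{L}(M)$, works at finite levels with the norm and extension maps $N, \iota$, uses Lemma~\ref{ab} (whose hypothesis $\prk(A) = \prk(pA)$ is where the order bound in \rf{cstab} enters) to force $\iota_{n,n+k}(c_n) = p^k c_{n+k}$ on the $\lambda$-part, applies Iwasawa's Theorem~VI and Fact~\ref{sa} to extract a bounded-rank tail, and then derives a contradiction from $T^2 x \notin \id{D}(M)$ by means of the rank distance $d_n$ of Fact~\ref{dlam}. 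Your two formal steps are fine bookkeeping, but the substance -- the interaction of the two clauses of \rf{cstab} with the finite-level structure -- is exactly the part you leave unproved.
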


We observe that the condition \rf{cstab} can be easily satisfied by
eventually replacing an initial field $\M$ with some extension, as explained
in Fact \ref{b2} of Appendix 3.1 

Next we define Thaine shifts and lifts.  Let $\M$ be a CM Galois field
and consider $a = (a_n)_{n \in \N} \in \apr{\M}$, some norm coherent
sequence. We assume that the norm maps $\Norm_{\M_n/\M_{n'}} :
A_n^-(\M) \ra A_{n'}^-(\M)$ are surjective for all $n > n' \geq 1$ and
fix some integer $m > \kappa(\M)$ -- with $\M \cap \B_{\infty} =
\B_{\kappa(\M)}$ -- and a totally split prime $\eu{q} \in a_{m}$,
which is inert in $\M_{\infty}/\M_m$. Let $q \in \Q$ be the rational
prime below $\eu{q}$ and assume that $q \equiv 1 \bmod p$.  Let $\F
\subset \Q[ \zeta_q ]$ be the subfield of degree $p$ in the \nth{q}
cyclotomic extension. We let $\KL_n = \M_n \cdot \F$ and $\KL_{\infty}
= \M_{\infty} \cdot \F$.  The tower $\KL_{\infty}/\KL$ is {\em the
  inert Thaine shift} of the initial cyclotomic extension
$\M_{\infty}/\M$, induced by $\eu{q} \in a_m$.  Let $\eu{Q} \subset
\KL_m$ be the ramified prime above $\eu{q}$. According to Lemma
\ref{thlift} in the Appendix, we may apply Tchebotarew's Theorem in
order to construct a sequence $b = (b_n)_{n \in \N} \in \apr{\KL}$
such that $b_m = [ \eu{Q} ]$ is the class of $\eu{Q}$ and
$\Norm_{\KL_n/\M_n}(b_n) = a_n$ for all $n \in \N$. In the projective
limit, we then also have $\Norm_{\KL_{\infty}/\M_{\infty}}(b) = a$.  A
sequence determined in this way will be denoted {\em a Thaine lift of
  $a$}. It is not unique.

Let $F = \Gal(\F/\Q)$ be generated by $\nu \in F$; we write $s := \nu
- 1$ and $\Phi_p(\nu) = \frac{(s+1)^p - 1}{s}$. By using the identity
$\frac{x^p-1}{x-1} = \frac{(y+1)^p - 1 }{y} = y^{p-1} + O(p)$, we see
that the algebraic norm verifies
\begin{eqnarray}
\label{anorm}
\id{N} & := & \sum_{i=0}^{p-1} \nu^i = \Phi_p(\nu) = p u(s) + s^{p-1} = 
p + s f(s), \\
& & \quad \nonumber  f \in \Z_p[ X ] \setminus p \Z_p[ s ],  \quad \in (\ZM{p^N}[ s ])^{\times}, \forall N > 0. 
\end{eqnarray}

Since $\eu{q}$ is totally split in $\M_m/\Q$, the extensions
$\KL_n/\Q$ are Galois and $F$ commutes with $\Gal(\M_m/\Q)$, for every
$m$ and $\Gal(\KL_n/\M_n) \cong F$.

\subsection{Constructing the base field}
For our proof we shall choose a base field $\K$ as follows. Start with
some CM field $\K_{start}$ for which one assumes that $\mu > 0$, and
let $-D$ be a quadratic non-residue modulo $p$ -- so
$\lchooses{-D}{p} = -1$ -- and $\rg{k} = \Q[ \sqrt{-D} ]$ be a
quadratic imaginary extension. Our start field should be galois
and contain the \nth{p} roots of unity.  We require thus that $\K
\supset \K_{start}^{(n)}[ \zeta_p, \sqrt{-D} ]$.

We additionally expect that the primes that ramify in $\K_{\infty}/\K$
be totally ramified and the norms $N_{n,m} : A(\K_n) \ra A(\K_m)$ be
surjective, so $\K_{\infty} \cap \KH(\K) = \K$. We also require that
the exponent $\exp(\id{M}(\apr{\K})) \geq p^2$, which can be achieved
by means of Fact \ref{b2}. The first step of the construction consists thus in
replacing $\K_{start}$ by an initial field $\K_{ini} =
\K_{start}^{(n)}[ \zeta_p, \sqrt{-D} ]$.  If
$\exp(\id{M}(\apr{\K_{ini}})) = p$, then replace $\K_{ini}$ by some
Thaine shift thereof, in order to increase the exponent. We need to
fulfill the condition \rf{cstab} required in Proposition
\ref{tpdeco}; for this we determine an integer $t$ as follows: let $L
= \id{L}(\apr{\K_{ini}})$ and $r = \prk(L) = \lambda(\K_{ini})$.  Let
$L_t = \Norm_{\K_{ini,\infty}}/\K_{ini; t}(L)$. We then require that
\begin{eqnarray}
\label{stab}
\prk(L_t) = r \quad \hbox{ and $\ord(x) > p^2$} \quad \hbox{ for all $x \in L_t \setminus L_t^p$}.
\end{eqnarray}

\begin{figure}
\centering
\begin{tikzpicture}[node distance = 2cm, auto]
      \node (Q) {$\mathbb{Q}$};
      \node (k) [above of=Q, right of=Q,node distance = 2cm] {$\rg{k} =\mathbb{Q}(\sqrt{-D})$};
      \node (Ks) [below of=Q, right of=Q, right of=Q] {$\mathbb{K}_{start}$};
      \node (Ksp) [above of=Ks, right of=Ks, node distance = 2cm] {$\mathbb{K}_{ini}=\mathbb{K}_{start}^{(n)} (\zeta_p, \sqrt{-D})$};
      \node (Kini) [above of=Ksp, right of=Ksp, node distance = 1cm] {$\mathbb{K}'_{ini}=\mathbb{K}_{ini, t}$};
      \node (Bk) [above of=Q] {$\mathbb{B}_{\kappa}$};
      \node (B) [above of=Bk, node distance = 3cm] {$\mathbb{B}$};
      \node (kk) [above of=k] {$k_{\kappa}$};
      \node (kinf) [above of=kk, node distance = 3cm] {$k_{\infty}$};
      \node (K) [above of=Kini] {$\mathbb{K}=\mathbb{K}'_{ini,k'} = \mathbb{K}_{ini,k'+t}$};
      \node (Kinf) [above of=K, node distance = 3cm] {$\mathbb{K}_{\infty}$};
      \draw[-] (Q) to node {} (k);
      \draw[-] (Q) to node {} (Ks);
      \draw[-] (k) to node {} (Ksp);
      \draw[-] (Ks) to node {} (Ksp);
      \draw[-] (Q) to node {} (Bk);
      \draw[->] (Bk) to node {} (B);
      \draw[-] (k) to node {} (kk);
      \draw[->] (kk) to node {} (kinf);      
      \draw[-] (Ksp) to node {} (Kini);
      \draw[-] (Kini) to node {} (K);
      \draw[->] (K) to node {} (Kinf); 
      \draw[-] (Bk) to node {} (kk);
      \draw[-] (kk) to node {} (K);     
\end{tikzpicture}
\caption{Construction of the base-field $\K$} \label{fig:generalconstr}
\end{figure}
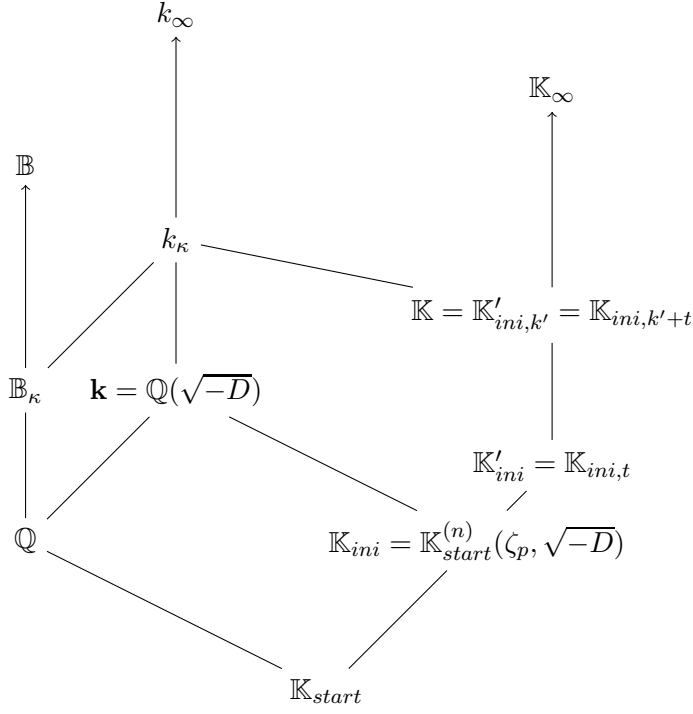

With this, we let $\K'_{ini} = \K_{ini,t} \subset \K_{ini,
  \infty}$. Finally we apply the Proposition \ref{tpdeco} in order to
choose a further extension of $\K'_{ini}$ in the same cyclotomic
$\Z_p$-extension, that yields some simple decomposition properties for
Thaine lifts.

Let $\kappa' = \kappa(\K'_{ini})$ and $\tilde{\tau} =
\gamma^{p^{\kappa'-1}}$ generate $\tilde{\Gamma} := \Gal\K'_{ini,
  \infty}/\K'_{ini}$.  Recall that $\gamma$ is a generator of
$\Gal(\B_{\infty}/\B)$, which explains the definition of
$\tilde{\tau}$; finally, $\tilde{T} = \tilde{\tau}-1$ and we let
$p^{b}$ be the exponent of $\id{M}(\apr{\K'_{ini}})$. We let $k'$ be
such that $\tilde{\omega}_{k'} \in ( p^{b+1}, T^{2(b+1)} )$ and define
$\K = \K'_{ini, k'} = \K_{ini,k+t}$ and let finally $\kappa =
\kappa(\K)$ and $\tau = \gamma^{p^{\kappa-1}}, T = \tau-1$, etc.  We
conclude from Proposition \ref{tpdeco} and the choice of $k'$ that
\begin{eqnarray}
\label{kdeco}
T \cdot \apr{\K} \subset \id{D}(\apr{\K}).
\end{eqnarray}
Moreover:
\begin{remark}
\label{kp1}
Suppose that $\KL/\K$ is a Thaine shift and $y
\in \apr{\KL}$ is such that either
\begin{itemize}
\item[ 1. ] $p y = x + w$ with $x \in \iota_{\KL/\K}(\apr{\K})$ and $w
  \in \id{M}(\apr{\KL})$, or
\item[ 2. ] $p^{b+1} y \in \id{L}(\apr{\KL})$. 
\end{itemize}
Then $T y \in \id{D}(\apr{\KL})$ too. 

The second point is a direct consequence of the choice of $k'$ and of
Proposition \ref{tpdeco}.  For the first, since $x \in \apr{\K}$, we
know that $p^b x \in \id{L}(\apr{\K})$, so $p^{b+1} y = p^b x - p^b w
\in \id{L}(\apr{\K}) + \id{M}(\apr{\KL}) \subset \id{D}(\apr{\KL})$,
and the fact follows from point 2.
\end{remark}

The construction of the Thaine shift is shown in the Figure
\ref{fig:thaine}
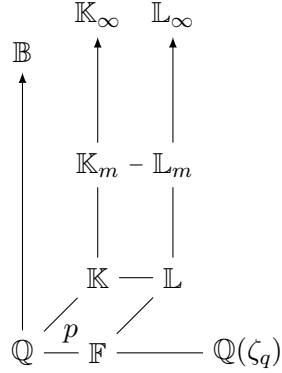
\begin{figure}
\centering
\begin{tikzpicture}[node distance = 1.5cm, auto]
      \node (Q) {$\mathbb{Q}$};
      \node (F) [right of=Q, node distance = 1cm] {$\mathbb{F}$};
      \node (Qz) [right of=F, node distance = 2cm] {$\mathbb{Q}(\zeta_q)$};
      \node (K) [above of=Q, right of=Q, node distance = 1cm] {$\mathbb{K}$};
      \node (L) [above of=F, right of=F, node distance = 1cm] {$\mathbb{L}$};
      \node (B) [above of=Q, node distance = 4cm] {$\mathbb{B}$};
      \node (Km) [above of=K] {$\mathbb{K}_m$};
      \node (Kinf) [above of=Km, node distance = 2cm] {$\mathbb{K}_{\infty}$};
      \node (Lm) [above of=L] {$\mathbb{L}_m$};
      \node (Linf) [above of=Lm, node distance = 2cm] {$\mathbb{L}_{\infty}$};
      \draw[-] (Q) to node {$\ \ p$} (F);
      \draw[-] (F) to node {} (Qz);
      \draw[-] (Q) to node {} (K);
      \draw[->,>=latex] (Q) to node {} (B);
      \draw[-] (F) to node {} (L);
      \draw[-] (K) to node {} (Km);
      \draw[-] (L) to node {} (Lm);      
      \draw[-] (K) to node {} (L);
      \draw[-] (Km) to node {} (Lm);      
      \draw[->,>=latex] (Km) to node {} (Kinf);
      \draw[->,>=latex] (Lm) to node {} (Linf);
\end{tikzpicture}
\caption{Thaine shift and lift} \label{fig:thaine}
\end{figure}

This concludes the sequence of steps for the construction of the base
field $\K$, which are reflected in the figure
\ref{fig:generalconstr}.  We review the conditions fulfilled by this
field:
\begin{itemize}
\item[ 0. ] The field $\K' = \K_{start}^{(n)}[ \zeta_p, \sqrt{-D} ]$
  and $\K_{ini} = \K'_s$ with $t$ subject to \rf{stab}.
\item[ 1. ] The field $\K$ is a Galois CM extension $\K/\Q$ which
  contains the \nth{p} roots of unity and such that $\mu(\K) > 0$ for
  the cyclotomic $\Z_p$-extension of $\K$. The primes that ramify
in $\K_{\infty}/\K$ are totally ramified.
\item[ 2. ] We have $T \cdot (\apr{\K}) \subset \id{D}(\apr{\K})$ and the
  properties in Remark \ref{kp1} are verified.
\item[ 3. ] The numbering of intermediate fields starts from $\kappa$,
  where $\K \cap \B = \B_{\kappa}$.
\item[ 4. ] The exponent $\exp(\id{M}(\apr{\K})) \geq p^2$.
\item[ 5. ] The field $\K$ contains an imaginary quadratic extension
  $\rg{k} = \Q[ \sqrt{ -d } ] \subset \K$ which has trivial $p$-part
  of the class group.  
  \end{itemize}

\subsection{Plan of the paper}
We choose a base field $\K$ as shown in the previous section and a
norm coherent sequence
\[ a = (a_n)_{n \in \N} \in \id{M}(\apr{\K}) \setminus \left( p \cdot
  \apr{\K} + (p, T) \id{M}(\apr{\K})\right). \] We note that condition
0. in the choice of $\K_{ini}$ readily implies that $\ord(a) =
\ord(a_1)$, so $(\ord(a)/p) \cdot a_1 \neq 0$ and thus
\begin{eqnarray}
 \label{abase}
 ( \ord(a)/p ) \in \id{M}(\apr{\K})[ p ] \setminus T \id{M}(\apr{\K})[ p ].
\end{eqnarray}
We let $o:=o_T(a) \geq 0 $ be such that $a \in (T^o) \cdot \apr{\K} \setminus
(T^{o+1}) \cdot \apr{\K}$.

In the second Chapter, we build a Thaine shift with respect to a prime
$\eu{q} \in a_m$ and a lift $b$ to $a$ and derive the main
cohomological properties of the shifted extension. The most important
facts are the decomposition $T x \in \id{D}(\apr{\KL})$ for all $x \in
\apr{\KL}$ with $\ell(x) \leq p \cdot \ord(\id{M}(\apr{\K})$ and
vanishing of the Tate cohomology $\wh{H}^0(F, \apr{\KL})$.  Based on
this and the fact that $T b$ is decomposed while $s b_m = 0$, as the
class of a ramified ideal, we obtain in Chapter 3 a sequence of
algebraic consequences which eventually lead to the fact that $T a =
\id{N}(b) \in \omega_m \id{M}(\apr{K})$; since this holds for
arbitrary choices of $m$, independently of $a$, we obtain a
contradiction to \rf{abase}, which proves the Iwasawa conjecture.

The paper is written so that the main ideas of the proof can be
presented in the main part of the text, leading in an efficient way to
the final proof.  The technical details and results are deduced with a
richness of detail, in the appendices.

\section{Thaine shift and proof of the Main Theorem}
We have selected in the first Chapter a base field $\K$ which is CM
and endowed with a list of properties. Consider the $\F_p[[ T
]]$-module $P := \apr{\K})/(p)$ and let $\pi : \apr{\K} \ra P$ be the
natural projection. Then for any $a \in \id{M}(\apr{\K}) \setminus p
\cdot \apr{\K}$ there is some integer $o_T(a) \geq 0$ such that the image
$\pi(a) \in \apr{\K}/(p)$ verifies $\pi(a) \in T^{o_T(a)}
\pi(\apr{\K})$. We choose $a \in \id{M}(\apr{\K}) \setminus p \cdot 
\apr{\K}$ with the minimal value of $o_T(a)$; let $m > \kappa(\K)$ be
such that
\begin{eqnarray}
\label{mdef}
\deg(\omega_m) > 2 (o_T(a) + 1)
\end{eqnarray}
and $\eu{q} \in a_m$ be a totally split prime. Let $q \subset \N$ be
the rational prime below $\eu{q}$; since $\K$ contains the \nth{p^m}
roots of unity, it follows that $q \equiv 1 \bmod p^m$. We let $\KL =
\K \cdot \F; \KL_{\infty} = \K_{\infty} \cdot \F$ be the Thaine shift
induced by $\eu{q}$, as described in the section \S 1.2 and let $b \in
\apr{\KL}$ be a Thaine lift of $a$.

For $C$ some $\Z_p[ s ]$-module, we use the Tate cohomologies
associated to $C$, defined by
\begin{eqnarray}
\label{tc}
\hat{H}^0(F, C) & = & \Ker(s : C \ra C)/(\id{N} C), \\
\hat{H}^1(F, C) & = & \Ker(\id{N} : C \ra C)/(s C). \nonumber
\end{eqnarray}
The notation introduced here will be kept throughout the paper.

Let $B'_n \subset A^-(\K_n)$ be the submodule spanned by the classes
of primes that ramify in $\KL_n/\K_n$. By choice of $\KL$, these are
the primes above $q$ and consequently $B'_n = \iota_{m,n}(B'_m)$ for
all $n > m$. Here $\iota_{m,n} : A^-(\KL_m) \ra A^-(\KL_n)$ is the
natural lift map. We let $p^v$ be the exponent of $B'_m$, so $p^v B'_n
= 0$ for all $n \geq m$.

Since $B'_n$ is constant up to isomorphism for all $n > m$, the
vanishing of $\hat{H}^0(F, \apr{\KL})$ is a straight forward 
consequence of   :

\begin{lemma}
\label{lh0}
\begin{eqnarray}
\label{h0} 
\Ker(s : \apr{\KL} \ra \apr{\KL}) = \iota(\apr{\K})
\end{eqnarray}
In particular, $\hat{H}^0(F, \apr{\KL}) = 0$.
\end{lemma}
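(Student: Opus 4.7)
The inclusion $\iota(\apr{\K}) \subseteq \Ker(s)$ is immediate: elements of $\iota(\apr{\K})$ are $F$-fixed, hence annihilated by $s=\nu-1$. For the reverse inclusion, the plan is to prove levelwise that each component $c_n$ of an $F$-fixed norm-coherent sequence lies in $\iota(A_n^-(\K))$, and then to assemble these components into a norm-coherent preimage.

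Let $c = (c_n) \in \apr{\KL}$ with $sc = 0$, so each $c_n \in A_n^-(\KL)^F$. Fix $n \geq m$. By the standard genus-theoretic argument for the cyclic degree-$p$ extension $\KL_n/\K_n$ (Hilbert 90 applied to the cocycle $\alpha$ arising from $\nu\eu{a} = (\alpha)\eu{a}$, with the unit-norm obstruction in $E_{\K_n}/N(E_{\KL_n})$ absorbed by passage to minus parts, where $2$ is invertible and $\tfrac{1-\jmath}{2}$ kills the relevant unit cohomology), every $F$-fixed class is represented by an ambiguous ideal. Ambiguous ideals are products of $\iota$-lifts from $\K_n$ and the ramified primes $\eu{Q}_n^{(i)}$ lying over the primes $\eu{q}_n^{(i)}$ of $\K_n$ above $q$, so
\[
c_n \ =\ \iota(c'_n) \ +\ \sum_i k_{n,i}\,[\eu{Q}_n^{(i)}]^-, \qquad c'_n \in A_n^-(\K),\ k_{n,i} \in \Z/p\Z,
\]
where the reduction mod $p$ is forced by the total-ramification identity $(\eu{Q}_n^{(i)})^p = \eu{q}_n^{(i)}\,\id{O}_{\KL_n}$, which yields $p\,[\eu{Q}_n^{(i)}]^- = \iota([\eu{q}_n^{(i)}]^-) \in \iota(A_n^-(\K))$.

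The norm-coherence of $c$ now kills the ramified contribution. Linear disjointness of $\F$ from $\K_\infty$ gives $\KL_n = \K_n \cdot \KL_{n-1}$ and hence the commutation $N_{\KL_n/\KL_{n-1}} \circ \iota = \iota \circ N_{\K_n/\K_{n-1}}$. Moreover, by construction of the Thaine shift the primes $\eu{q}_m^{(i)}$ are inert in $\K_\infty/\K_m$; since $\KL_m/\K_m$ is totally ramified at $\eu{q}_m^{(i)}$, the residue fields agree, so $\eu{Q}_m^{(i)}$ is inert in $\KL_\infty/\KL_m$ as well. This gives $N_{\KL_n/\KL_{n-1}}([\eu{Q}_n^{(i)}]^-) = p\,[\eu{Q}_{n-1}^{(i)}]^- = \iota([\eu{q}_{n-1}^{(i)}]^-)$. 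Applying the norm to the displayed decomposition then yields $c_{n-1} \in \iota(A_{n-1}^-(\K))$. Since every $c_{n_0}$ equals $N^{n-n_0}(c_n)$ for $n$ arbitrarily large, every $c_{n_0}$ lies in $\iota(A_{n_0}^-(\K))$.

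To produce a genuine preimage $c' \in \apr{\K}$, set $T_n := \iota_n^{-1}(c_n) \subseteq A_n^-(\K)$; each $T_n$ is a non-empty finite coset of the capitulation kernel $\ker(\iota_n)$, and the commutation $\iota \circ N = N \circ \iota$ ensures $N_{\K_n/\K_{n-1}}(T_n) \subseteq T_{n-1}$. The inverse system of non-empty finite sets $(T_n)$ therefore has non-empty projective limit, delivering $c' = (c'_n) \in \apr{\K}$ with $\iota(c') = c$. This completes the reverse inclusion. The vanishing $\hat{H}^0(F, \apr{\KL}) = 0$ follows at once: by the Tchebotarev-based Thaine-lift construction the norm $\Norm_{\KL_\infty/\K_\infty}: \apr{\KL} \to \apr{\K}$ is surjective, so $\id{N}(\apr{\KL}) = \iota(\Norm_{\KL_\infty/\K_\infty}(\apr{\KL})) = \iota(\apr{\K}) = \Ker(s)$. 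The main technical obstacle is the genus-theoretic decomposition at each finite level, specifically controlling the Hilbert-90 unit-cohomology obstruction $E_{\K_n}/N(E_{\KL_n})$; this is precisely where the CM nature of $\K$ and the structural hypotheses (notably the imaginary quadratic subfield $\rg{k}\subset\K$ of trivial $p$-class group) are designed to intervene.
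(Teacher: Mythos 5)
Your overall strategy --- genus theory at each finite level, norm-coherence killing the ramified-prime contribution, and a compactness argument to assemble the preimage --- is essentially the paper's, and the last two steps are correct (indeed written more carefully than the paper, which compresses the assembly of the coherent preimage into a single sentence). But there is a genuine gap in the genus-theoretic step, exactly at the point you yourself flag as the main technical obstacle.

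You claim the unit obstruction $E_{\K_n}/N(E_{\KL_n})$ is ``absorbed by passage to minus parts, where $2$ is invertible and $\frac{1-\jmath}{2}$ kills the relevant unit cohomology.'' This is false. Since $\F$ is totally real, $\mu(\KL_n)=\mu(\K_n)$ and $F$ acts trivially on these roots of unity; in minus parts the units are exactly the roots of unity, so
\[
E^-_{\K_n}/N_{\KL_n/\K_n}(E^-_{\KL_n}) \;=\; \mu(\K_n)/\mu(\K_n)^p \;\cong\; \Z/p\Z,
\]
which is not trivial. The idempotent $(1-\jmath)/2$ acts as the identity on $\mu$ (because $\jmath\zeta = \zeta^{-1}$), so it kills nothing there, and the hypothesis on the imaginary quadratic subfield $\rg{k}$ is not what rescues this step either. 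As written, your decomposition $c_n = \iota(c'_n)+\sum_i k_{n,i}[\eu{Q}_n^{(i)}]^-$ can fail by an index of $p$, and then the rest of the argument has nothing to work with.

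What actually closes the gap --- and is the real content of the paper's proof --- is a local computation at the ramified prime. The obstruction element $N_{\KL_n/\K_n}(\xi^{1-\jmath})$ is not an arbitrary root of unity: it lies in $N_{\KL_n/\K_n}(\KL_n^\times)\cap\mu(\K_n)$, and the key fact is
\[
N_{\KL_n/\K_n}(\KL_n^\times)\cap\mu(\K_n) \;\subseteq\; \mu(\K_n)^p \;=\; N_{\KL_n/\K_n}(\mu(\KL_n)).
\]
This holds because $\eu{q}_m$ is totally split in $\K_m/\Q$ and inert in $\K_n/\K_m$, so the completion $\K_{n,\eu{q}_n}$ is the unramified degree-$p^{n-m}$ extension of $\Q_q$, whose $p$-primary roots of unity are exactly $\mu_{p^n}$; and $\KL_{n,\eu{Q}_n}/\K_{n,\eu{q}_n}$ is tamely ramified of degree $p$, so the local norm image of the Teichm\"uller units is precisely their $p$-th powers. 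Hence a global root of unity that is a global norm is a local $p$-th power at $\eu{q}_n$, and therefore a global $p$-th power. Only after this step can one modify $\xi$ by a root of unity so that $N_{\KL_n/\K_n}(\xi^{1-\jmath})=1$, apply Hilbert~$90$, and obtain the ambiguous-ideal representative. This local norm argument is the missing ingredient in your proof.
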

\begin{proof}
  Consider $x = (x_n)_{n \in \N} \in \Ker( s : \apr{\KL} \ra
  \apr{\KL})$ and let $N > m + b$. Let $\eu{X} \in x_N$ be a prime:
  then $(\eu{X}^{s(1-\jmath)}) = (\xi^{1-\jmath})$, for some $\xi \in
  \KL_N$ and $\id{N}(\xi^{1-\jmath}) \in \mu(\K_N)$. Since $\eu{q}_m$
  is inert in $\K_N/\K_m$, we have $\id{N}(\KL_N) \cap \mu(\K_N)
  \subset \mu(\K_N)^p$. We may thus assume, after eventually modifying
  $\xi$ by a root of unity, that $\id{N}(\xi^{1-\jmath}) =
  1$. Hilbert's Theorem 90 implies that there is some $\alpha \in
  \KL_N$ such that
\[ \eu{X}^{s(1-\jmath)} = (\xi^{1-\jmath}) = (\alpha^{1-\jmath})^s \quad \Rightarrow \quad
 (\eu{X}/(\alpha))^{(1-\jmath)s} = (1).
\]

The ideal $\eu{Y} := (\eu{X}/(\alpha))^{1-\jmath} \in x_N^2$ verifies
$\eu{Y}^s = (1)$. If $x_N \not \in \iota(\apr{\K})$, then $\eu{Y}$
must be a product of ramified primes, so $x_N^2 \in B'_N +
\iota_{\K_N, \KL_N}(A_N^-(\K))$.  Recall that $B'_N =
\iota_{m,N}(B'_m)$ is spanned by the classes of the ramified primes
and $p^v B'_N = 0$. In particular $x_N^2 \in B'_N + \iota_{\K_N,
  \KL_N}(A_N^-(\K))$ and $B'_N = \iota_{m,N}(B'_m)$ imply that
\[ x_{N-v} = \Norm_{N, N-v} (x_N) \in \iota_{\K_{N-v}}, \KL_{N-v}(A^-(\K_{N-v})) + {B'_m}^{p^v} = \iota_{\K_{N-v}}, \KL_{N-v}(A^-(\K_{N-v})).\]  
This happens for all $N > m + v$, so $x \in \iota_{\K, \KL}(\apr{\K})$, as
claimed.
\end{proof}

Note that at finite levels we have 
\[ \hat{H}^0(F,A_n^-(\KL)) \cong B'_n/(B'_n \cap
\iota_{\KL/\K}(A_n^-(\K))) \neq 0. \] The Herbrand quotient of finite
groups is trivial, so $| \hat{H}^0(F,A_n^-(\KL)) | = | \hat{H}^1(F,A_n^-(\KL)) |$. 
In the projective limit however, $\wh{H}^1(F, \apr{\KL}) \neq 0$ so equality
is not maintained. In the Appendix \S 3.3, we prove though the following result:

\begin{lemma}
\label{hord}
Suppose that $v \in \id{M}(\apr{\KL})$ has non trivial image in the Tate group
$\wh{H}^1\left( F, \left( \id{M}(\apr{\KL}) \right) \right)$.
Then either $\ord(v) > p \exp(\id{M}(\apr{\K})$ or $T^2 v \in s \apr{\KL}$.
\end{lemma}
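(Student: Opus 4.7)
The proof exploits the identity $\id{N} = p + s f(s)$ from~(\ref{anorm}) to convert information about the $\id{N}$-kernel into $s$-divisibility. Since $\id{N} v = 0$, one has $pv = -s f(s) v$; setting $u := -f(s)v \in \apr{\KL}$, this reads
\[
  pv = s u,
\]
so $pv$ already lies in $s\apr{\KL}$. The goal is to upgrade this identity from the factor $p$ to the factor $T^2$, under the small-order hypothesis.

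Let $p^b = \exp(\id{M}(\apr{\K}))$ and assume $\ord(v) \le p^{b+1}$, so that $p^{b+1} v = 0$. Then
\[
  s(p^b u) \;=\; p^b (pv) \;=\; 0,
\]
and Lemma~\ref{lh0} yields $p^b u \in \ker(s) = \iota(\apr{\K})$. Write $p^b u = \iota(y)$ with $y \in \apr{\K}$.

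Next I invoke the decomposition properties of $\K$ (items~2 and~4 of the construction) together with Proposition~\ref{tpdeco}. Since $T \apr{\K} \subset \id{D}(\apr{\K})$ and the lift $\iota$ preserves both $\id{L}$ and $\id{M}$, we obtain
\[
 T(p^b u) \;=\; \iota(Ty) \;\in\; \id{D}(\apr{\KL}).
\]
Iterating Proposition~\ref{tpdeco} and Remark~\ref{kp1}.2 to strip the factor $p^b$ off of $Tu$, one arrives at $T^2 u \in \id{D}(\apr{\KL})$.

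Finally, apply $T^2$ to the relation $pv = su$: since $T$ and $s$ generate distinct (and therefore commuting) Galois actions,
\[
  p(T^2 v) \;=\; s(T^2 u).
\]
Decomposing $T^2 u = u_\lambda + u_\mu$ with $u_\lambda \in \id{L}(\apr{\KL})$, $u_\mu \in \id{M}(\apr{\KL})$, and using the rigidity of $\id{M}(\apr{\KL})$ (available because $\KL$ is CM) together with the $p$-torsion-freeness of $\id{L}(\apr{\KL})$, one divides this identity by $p$ on both sides. The ambiguity of that division is an element of $\ker(p) \cap \ker(s) \subset \iota(\apr{\K}[p])$, which has exponent at most $p^b$; by the very bound $\ord(v) \le p^{b+1}$ such an error reabsorbs into $s\apr{\KL}$. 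Hence $T^2 v \in s\apr{\KL}$, as desired.

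The main obstacle is the concluding ``division by $p$'': extracting $T^2 v$ cleanly from $p(T^2 v) = s(T^2 u)$ without losing cohomological information. This is where the construction of $\K$ is crucial -- rigidity of $\id{M}(\apr{\KL})$ and the bound on $\exp(\id{M}(\apr{\K}))$ together ensure that the $p$-torsion ambiguity of the division is already trivial modulo $s\apr{\KL}$.
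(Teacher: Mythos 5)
Your argument breaks down at the concluding step, and the breakdown is fundamental rather than a small detail.

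The relation you start from, $pv = su$ with $u := -f(s)v$, is just the identity $\id{N}v = 0$ rewritten via \eqref{anorm}; it holds for \emph{every} $v \in \Ker(\id{N})$ and carries no additional content. Moreover, since $v \in \id{M}(\apr{\KL})$ and $\id{M}$ is an $F$-submodule, $u = -f(s)v$ already lies in $\id{M}(\apr{\KL}) \subset \id{D}(\apr{\KL})$; the excursion through $\iota(\apr{\K})$, Proposition~\ref{tpdeco}, and Remark~\ref{kp1} to re-establish $T^2 u \in \id{D}$ is unnecessary (and as written, the ``iteration to strip off $p^b$'' would actually produce $T^{2b+1}u \in \id{D}$, not $T^2 u$, if one iterated Proposition~\ref{tpdeco} honestly). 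So after all the work, your identity $p(T^2 v) = s(T^2 u)$ is literally $\id{N}(T^2 v) = 0$, which we knew from the outset.

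The ``division by $p$'' which is supposed to close the argument is therefore asked to produce the entire content of the lemma out of nothing, and it cannot. From $p(T^2 v) = s(T^2 u)$ one cannot extract $T^2 v \in s\apr{\KL}$: the module $\apr{\KL}$ is not $p$-divisible, and even if one writes $T^2 u = pu'$, the most one gets is $T^2 v \in s\apr{\KL} + \apr{\KL}[p]$, and there is no reason for $\apr{\KL}[p]$ to lie in $s\apr{\KL}$. Your patch --- that the ambiguity lies in $\ker(p)\cap\ker(s)$ of ``exponent at most $p^b$'' and ``reabsorbs'' --- is incoherent: $\ker(p)$ has exponent $p$, not $p^b$, and nothing forces elements of $\ker(p)\cap\ker(s) = \iota(\apr{\K})[p]$ into $s\apr{\KL}$; indeed $\Ker(s) \cap s\apr{\KL}$ is controlled by $\wh{H}^0$ and is small by Lemma~\ref{lh0}. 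The hypothesis $\ord(v) \le p\exp(\id{M}(\apr{\K}))$ is never truly brought to bear: you only use it to produce the trivial statement $s(p^b u) = 0$.

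The paper's proof is of an entirely different kind. It does not work inside $\apr{\KL}$ at all: it passes to the ray class group $M = \Gal(\T/\KL_\infty)$ for the modulus $q$, uses the structural Fact~\ref{h1str} to characterize when $\beta(x) \neq 0$ in terms of how the Artin lift acts on the $\lambda$-type layer $\Gal(\T/\KH(\KL_\infty))$, lifts $v$ to $v' \in M$, and applies Proposition~\ref{tpdeco} and Remark~\ref{kp1} \emph{to the lifted element $v'$ in $M$} (where the order bound is actually what makes the decomposition kick in). Once $T^2 v'$ decomposes, its $\mu$-component lands in the class-group part $\Gal(\KH(\KL_\infty)/\KL_\infty)$, and Fact~\ref{h1str} then forces $\beta(T^2 v) = 0$. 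The extra room created by the ray class group --- the exact sequence \eqref{eex} with its $(\Z_p)^{|\Delta_m|}$ $\lambda$-part --- is precisely what separates the obstruction to $\beta(x) = 0$ from the $\mu$-part, and this class-field-theoretic input has no counterpart in your purely module-theoretic manipulation.
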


We turn now our attention to the decomposition of the Thaine lift $b$;
we prove in the Appendix 3.3 the following
\begin{proposition}
\label{px}
Let $F = < \nu >$ be a cyclic group of order $p$ acting on the
$p$-abelian group $B$ and let $x \in B$ be such that $y = \id{N}(x)$
has order $\ord(y) = q = p^l > p$.  Then $\ord(x) \leq p q$.
\end{proposition}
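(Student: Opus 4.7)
The natural approach combines the algebraic decomposition $\id{N} = p + sg$ from \rf{anorm} (writing $g = f(s) \in \Z_p[s]$) with the annihilator relation $s\id{N} = 0$, equivalently the commutation rule $ps = -s^2 g$ in the group ring $R = \Z_p[F]$ acting on $B$, and proceeds by induction on $l$. For the inductive step $l \ge 3$, set $x' = px$; then $y' = \id{N}(x') = py$ has order $\ord(y') = p^{l-1} > p$, and the inductive hypothesis gives $\ord(x') \le p \cdot \ord(y') = p^l$, whence $\ord(x) = p\cdot\ord(x') \le p^{l+1} = pq$, as desired.

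The base case $l = 2$ is the crux. Starting from $y = px + sgx$ and iterating using $ps = -s^2 g$, together with the identity $sy = 0$ (which holds because $s\id{N} = 0$ forces $y \in B^F$), one obtains by straightforward induction the closed form $p^3 x = -s^3 g^3 x$. Expanding $(sg)^3 = (\id{N}-p)^3$ and using the ring relation $\id{N}^j = p^{j-1}\id{N}$ (a consequence of $\id{N}\nu^i = \id{N}$) collapses the expansion to $(sg)^3 = p^2 \cdot sg$ in $R$, so $p^3 x = -p^2(\id{N}-p)x = -p^2 y + p^3 x$, which recovers the hypothesis $p^2 y = 0$ but is tautological on its own. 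To extract true vanishing I invoke Herbrand's theorem: since the Tate cohomology groups $\hat{H}^0(F,B) = B^F/\id{N}B$ and $\hat{H}^{-1}(F,B) = \ker(\id{N})/sB$ are both $p$-torsion for the cyclic $p$-group $F$ acting on the $p$-abelian group $B$, one has $p\cdot\ker(\id{N}) \subseteq sB$. Because $\id{N}(p^2 x) = p^2 y = 0$ gives $p^2 x \in \ker(\id{N})$, this yields $p^3 x \in sB$.

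The main obstacle, as I foresee it, lies in upgrading the two partial conclusions $p^3 x \in sB$ (from Herbrand) and $p^3 x = -s^3 g^3 x$ (from the algebraic identity) to the outright vanishing $p^3 x = 0$. The hypothesis $q > p$, i.e.\ $l \ge 2$, should play an essential role here by ensuring that $py$ is a nonzero element of $B^F$ of order exactly $p$, thereby providing a nontrivial witness that constrains the cohomological obstruction. I expect the finishing argument to iterate Herbrand along the filtration $\cdots \subseteq s^2 B \subseteq sB \subseteq B$, using the ring-theoretic fact $s^p \in pR$ (from $\nu^p = 1$) to guarantee termination, together with the explicit expression of $p^3 x$ as an element of $s^3 g^3 B$ to pin down the residue class in each graded piece.
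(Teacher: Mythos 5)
Your inductive reduction from $l \ge 3$ to the base case $l = 2$ is valid (via $x \mapsto px$ together with $\ord(x) = p \cdot \ord(px)$), but the base case — which you correctly identify as ``the crux'' — is not actually proved, and you say so yourself. The two tools you bring to bear on it deliver nothing beyond each other: the iterated identity $p^3 x = -s^3 g^3 x$ is tautological (you verify that $(sg)^3 = p^2\,sg$ in $\Z_p[F]$, so it collapses back to $p^2 y = 0$), and the Herbrand bound $p\ker(\id{N}) \subseteq sB$ only gives $p^3 x \in sB$, which is \emph{already} implied by the first identity since $s^3 g^3 x \in sB$. There is a large distance between $p^3 x \in sB$ and $p^3 x = 0$: in a group ring over a local ring, $sB$ can be almost all of $B$. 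Your final paragraph (``I expect the finishing argument to iterate Herbrand along the filtration $s^k B$\dots'') is a programme, not a proof, and it does not converge: iterating the same Herbrand inequality along the $s$-adic filtration just reproduces $p^{k+2}x \in s^k B$ without improvement at any step, and $s^p \in pR$ does not ``terminate'' this because $pR$ is not nilpotent on $B$.

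The paper's proof proceeds quite differently and the difference matters. It fixes $r = \ord(x)$, passes to the \emph{finite cyclic} $(\Z/r\Z)[F]$-module $X = R\,x$, and works with that module's structure rather than with global identities in $B$. Lemma~\ref{h0fin} establishes the dichotomy: either $\hat{H}^0(F,X) = 0$ outright or $pqx = 0$ directly. Assuming $\hat{H}^0(F,X) = 0$, the trivial Herbrand quotient for finite modules forces $\hat{H}^1(F,X) = 0$ as well, which in turn forces $X[p]$ to be a \emph{cyclic} $\F_p[s]$-module; the proof then bounds $\prk(X) \le p$, extracts the minimal distinguished annihilator $\phi(s) = s^d - p^e h(s)$, and splits into the cases $d < p-1$, $d = p-1$, $d = p$, deriving either the bound $\ord(x) \le pq$ or a contradiction from the hypothesis $\ord(y) > p$ in each. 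All the leverage comes from the interaction between the cyclicity of $X[p]$ as $\F_p[s]$-module and the decomposition of the unit $u(s)$ in $\id{N} = pu(s) + s^{p-1}$ along the filtration by order — structural information about the single cyclic module $Rx$ that a pure identity in $\Z_p[F]$ acting on an arbitrary $B$ cannot recover. Your approach, which never descends from $B$ to the cyclic module $Rx$ and never exploits vanishing of the Herbrand quotient for finite modules (only its one-sided $p$-torsion bound), is therefore not a variant of the paper's argument but a genuinely different — and unfortunately insufficient — attack.
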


It implies
\begin{corollary}
\label{bdeco}
Let $y \in \apr{\KL}$ be such that $\ell(\id{N}(y)) = q > p$. Then $q
\leq \ell(y) \leq p q$ and $T y \in \id{D}(\apr{\KL})$.  All these
facts hold in particular for any Thaine lift $b$. In this case, one
has additionally $\ord( s b ) \leq \ord(b) \leq p q$.
\end{corollary}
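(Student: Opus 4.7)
The plan is to derive the corollary from Proposition \ref{px} by passing to a $p$-abelian quotient, and then to invoke Remark \ref{kp1}(2) together with Lemma \ref{lh0} to obtain the decomposition of $T y$.

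Consider the quotient $B := \apr{\KL}/\id{L}(\apr{\KL})$. Since the $F$-action on $\apr{\KL}$ commutes with the $\Lambda$-action and $\id{L}(\apr{\KL})$ is a $\Lambda$-submodule, $F$ acts on $B$; by the structure of Noetherian $\Lambda$-torsion modules $B$ is a $p$-abelian group of bounded exponent in which $p^j \overline{x} = 0$ is equivalent to $p^j x \in \id{L}(\apr{\KL})$. Consequently the $L$-order of $x$ coincides with the order of its image in $B$.

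The lower bound $q \leq \ell(y)$ is immediate: since $\id{N} \in \Z_p[F]$ commutes with multiplication by $p$ and preserves $\id{L}(\apr{\KL})$, from $p^j y \in \id{L}(\apr{\KL})$ one infers $p^j \id{N}(y) = \id{N}(p^j y) \in \id{L}(\apr{\KL})$, so $\ell(\id{N}(y)) \leq \ell(y)$. For the upper bound, I apply Proposition \ref{px} inside $B$ with $\overline{y}$ in place of $x$ and $\overline{\id{N}(y)} = \id{N}(\overline{y})$ of order $q > p$; the proposition yields $\ord(\overline{y}) \leq pq$, i.e. $\ell(y) \leq pq$.

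For the decomposition $T y \in \id{D}(\apr{\KL})$, Lemma \ref{lh0} shows $\id{N}(y) \in \iota(\apr{\K})$ since it is $F$-invariant. The exponent of $\id{M}(\apr{\K})$ is $p^b$ (condition 4 on $\K$) and $\iota$ maps $\id{L}(\apr{\K})$ into $\id{L}(\apr{\KL})$, so every $z \in \iota(\apr{\K})$ satisfies $p^b z \in \id{L}(\apr{\KL})$. In particular $q \leq p^b$, whence $\ell(y) \leq pq \leq p^{b+1}$ and therefore $p^{b+1} y \in \id{L}(\apr{\KL})$. Remark \ref{kp1}(2) now yields $T y \in \id{D}(\apr{\KL})$.

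Finally, for a Thaine lift $b$ of $a$ we have $\id{N}(b) = a$ with $\ord(a) = q > p$ by \rf{abase} and condition 4, so the general statement applies and gives $q \leq \ell(b) \leq pq$ together with $T b \in \id{D}(\apr{\KL})$. The auxiliary bound $\ord(sb) \leq \ord(b)$ is formal: $\ord(b) \cdot sb = s\bigl(\ord(b) \cdot b\bigr) = 0$. The main obstacle of this approach is the clean identification of the $F$-action on $B$ and the passage from $\id{N}(y) \in \iota(\apr{\K})$ to the numerical bound $q \leq p^b$, which is what allows Proposition \ref{px} to feed into the hypothesis of Remark \ref{kp1}(2).
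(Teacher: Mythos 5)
Your proposal follows the paper's overall strategy (feed Proposition \ref{px} the element $y$ after ``forgetting'' its $\lambda$-part, then use Remark \ref{kp1} for the decomposition), but you implement the first step differently: where the paper multiplies $y$ by a distinguished polynomial $f(T)$ annihilating $p^{\ell(y)}y$ and works with $\beta = f(T)y \in \id{M}(\apr{\KL})$, you pass to the quotient $B = \apr{\KL}/\id{L}(\apr{\KL})$ and apply the Proposition to $\overline{y}$. The quotient route is arguably cleaner: it sidesteps the question of whether $f(T)$ shortens the order of $y$, since in $B$ the order of $\overline{y}$ is by definition $p^{\ell(y)}$, and the $F$-action descends because $F$ commutes with $\Lambda$ and hence preserves $\id{L}$. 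You also invoke Lemma \ref{lh0} explicitly to obtain $\id{N}(y) \in \iota(\apr{\K})$, which is the substance the paper leaves implicit behind ``by definition of this order,'' and you correctly use part~2 of Remark \ref{kp1} (the paper's reference to the ``first claim'' at this point is a slip; the hypothesis $p^{b+1}y \in \id{L}(\apr{\KL})$ that both of you establish is exactly that of part~2).

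Two gaps remain. First, you conclude $q \leq p^b$ from $\id{N}(y)\in\iota(\apr{\K})$ by asserting ``every $z\in\iota(\apr{\K})$ satisfies $p^b z\in\id{L}(\apr{\KL})$.'' That requires $p^b\,\apr{\K}\subset\id{L}(\apr{\K})$, i.e.\ that the quotient $\apr{\K}/\id{L}(\apr{\K})$ has exponent $\leq p^b$. Condition 4 on $\K$ only gives $\exp(\id{M}(\apr{\K}))=p^b$, and $\apr{\K}/\id{L}$ may have strictly larger exponent than $\id{M}(\apr{\K})$ if $\apr{\K}$ is not fully decomposed; condition 2 only guarantees $T\apr{\K}\subset\id{D}(\apr{\K})$, not $\apr{\K}=\id{D}(\apr{\K})$. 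For the Thaine lift $b$ this is harmless, because $\id{N}(b)=a\in\id{M}(\apr{\K})$ directly gives $q=\ord(a)\leq p^b$, but the general form of the corollary needs either a sharper hypothesis or a small additional argument. Second, you do not prove the claim $\ord(b)\leq pq$ for the Thaine lift --- you only remark $\ord(sb)\leq\ord(b)$. Your quotient argument only controls $\ell(b)$, not $\ord(b)$. The paper establishes $\ord(b)=pq$ by combining the lower bound $\ord(b)\geq\ord(b_m)=pq$ (from $p\,b_m=\iota(a_m)$ with $\ord(a_m)=q$, since $\eu{Q}^p=\eu{q}\id{O}_{\KL_m}$) with an upper bound that requires applying Proposition \ref{px} at each finite level $n$ to get $\ord(b_n)\leq p\cdot\ord(a_n)\leq pq$, then taking the supremum; this is a genuinely different use of Proposition \ref{px} that your quotient does not capture.
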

\begin{proof}
  Let $f(T) \in \Z_p[ T ]$ be a distinguished polynomial that
  annihilates $p^{\ell(y)} y \in \id{L}(\apr{\KL})$ and let $\beta =
  f(T) y$. Then $\alpha := \id{N}(y)$ has the order $q > p$, by
  hypothesis, so we may apply the Proposition \ref{px}. This implies
  that $\ord(\beta) \leq p q$ and thus $\ell(y) \leq p q \leq p
  \exp(\id{M}(\apr{\K}))$, by definition of this order. The first claim
  in Remark \ref{kp1} implies that $T y \in \id{D}(\apr{\KL})$.  Since
  $\ord(a) = \ord(\id{N}(b)) > p$ by choice of $a$, the statement
  applies in particular to any Thaine lift $b$.  In this case, we know
  that $p b_m = a_m$ and $\ord(a_m) = q$, hence $\ord(b) \geq
  \ord(b_m) = p q$, hence $\ord(b) = p q$. We obviously have $\ord(s b) \leq \ord(b)$.
\end{proof}

\subsection{The vanishing of $\mu$}
Since $s b_m = 0$, the Theorem VI of Iwasawa \cite{Iw} implies that
there is some $c \in \apr{\KL}$ such that $s b = \nu_{m,1} c$. Then
$\nu_{m,1} (\id{N}(c) ) = 0$ and the Fact \ref{nonu} in the Appendix
implies that $\id{N}(c) = 0$. Moreover,
\[ p^{\ell(b)} c = p^{\ell(b)} (\nu_{m,1} s b ) \in
\id{L}(\apr{\KL}) \] so, by Corollary \ref{bdeco}, $\ell(c) \leq p q$
too, and thus $T c \in \id{D}(\apr{\KL})$. Let $T c = c_{\lambda} +
c_{\mu}$. Since $\id{L}(\apr{\K}) \cap \id{M}(\apr{\K}) = 0$, it
follows that $\id{N}(c_{\lambda}) = \id{N}(c_{\mu}) = 0$,
individually. We have, by comparing parts, $s b_{\mu} = \nu_{m,1}
c_{\mu}$, so $p q \cdot \nu_{m,1} c_{\mu} = 0$, and since
$\id{M}(\apr{\KL})$ is rigid, \rf{rigid} implies that $\ord(c_{\mu})
\leq p q$. We may thus apply Lemma \ref{hord}, which implies that $T^2
c_{\mu} \in s \id{M}(\apr{\KL})$, say $T^2 c_{\mu} = s x$; then $s
(T^2 b_{\mu} - \nu_{m,1} x) = 0$ and Lemma \ref{lh0} implies that
$T^2 b_{\mu} = \nu_{m,1} x + z, z \in \iota(\apr{\KL})$. By taking
norms we obtain $T^3 a = \nu_{m,1} x + p z$. This implies $o_T(a)+3
\geq \deg(\nu_{m,1})$ and we can choose $m$ large enough, to obtain a
contradiction. This confirms the claim of the Main Theorem.

\section{Appendix}
In the Appendix, unless otherwise specified, the notation used in the
various Facts and Lemmata is the one used in the section where these
are invoked in the text.  In the next section we provide a list of
disparate, useful facts:
\subsection{Auxiliary facts}
We start by proving that if $\mu >0$ for some number field, then it
is also non - trivial for finite extensions thereof.   
\begin{fact}
\label{blow}
Let $K$ be a number field for which $\mu(K) \neq 0$ and $L/K$ be
a finite extension, which is Galois over $K$. Then $\mu(L) \neq
0$.
\end{fact}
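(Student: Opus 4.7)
The plan is to exhibit $X(K)$ as a ``quotient of a quotient'' (up to a finite index) of $X(L)$, and then translate ``$\mu = 0$'' into the module-theoretic condition ``finitely generated over $\Z_p$'', which propagates through such maps.

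Let $\KH_\infty(\K)$ and $\KH_\infty(\LK)$ denote the maximal unramified abelian pro-$p$ extensions of $K_\infty$ and $L_\infty$, respectively, so $X(K) = \Gal(\KH_\infty(K)/K_\infty)$ and $X(L) = \Gal(\KH_\infty(L)/L_\infty)$.

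First, I would show that $L_\infty \cdot \KH_\infty(K) \subset \KH_\infty(L)$. Since $\KH_\infty(K)/K_\infty$ is unramified at every finite prime, the same holds for the compositum $L_\infty \cdot \KH_\infty(K)/L_\infty$ (being unramified is preserved under base change, prime by prime); and this compositum is abelian pro-$p$ over $L_\infty$ as a quotient of $\Gal(\KH_\infty(K)/K_\infty)$. Hence it is contained in the maximal such extension $\KH_\infty(L)$. Restriction then yields a surjective $\Lambda$-morphism
\[
 X(L) \;\twoheadrightarrow\; \Gal(L_\infty \KH_\infty(K)/L_\infty) \;\cong\; \Gal(\KH_\infty(K)/(L_\infty \cap \KH_\infty(K))),
\]
whose image is a subgroup $H \subset X(K)$ of index $[L_\infty \cap \KH_\infty(K) : K_\infty] \leq [L : K] < \infty$, a bound independent of any cyclotomic level.

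Now I would argue by contrapositive. Recall that for a Noetherian torsion $\Lambda$-module $M$, vanishing of the $\mu$-invariant is equivalent to $M$ being finitely generated over $\Z_p$ (the elementary module has no $\Lambda/(p^e)$-summands). If $\mu(L) = 0$, then $X(L)$ is finitely generated over $\Z_p$; hence so is its quotient $H$; and hence so is $X(K)$, since it sits in an exact sequence $0 \to H \to X(K) \to X(K)/H \to 0$ with $X(K)/H$ finite. This forces $\mu(K) = 0$, contradicting the hypothesis. Therefore $\mu(L) \neq 0$.

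The main delicate point is the verification in the first step that an everywhere-unramified extension of $K_\infty$ remains unramified after base change to $L_\infty$; this is standard but essential. Once this is secured, the conclusion is pure module theory: a surjection from a $\Z_p$-finitely generated module onto a finite-index subgroup of $X(K)$ forces $X(K)$ itself to be $\Z_p$-finitely generated, so any ``$\mu$-growth'' at the base $K$ must already be present at $L$.
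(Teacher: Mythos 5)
Your proof is correct, and it takes a genuinely different — and in fact cleaner — route than the paper's. The paper proceeds by a Kummer-theoretic reduction: it first peels off the prime-to-$p$ part of $[L:K]$ using the vanishing of the capitulation kernel, then uses solvability of the $p$-Sylow subgroup of $\Gal(L/K)$ to reduce to a single cyclic degree-$p$ Kummer step $k' = k[a^{1/p}]$, and finally, starting from an unramified $F/k_\infty$ with $\Gal(F/k_\infty) \cong \F_p[[T]]$ (witnessing $\mu(k)>0$), explicitly tracks that the $p$-ranks of the unramified layers $F_n k'_n/k'_n$ still grow without bound, so $\mu(k') > 0$. Your argument bypasses all of that: you use the inclusion $L_\infty\cdot\KH_\infty(K) \subset \KH_\infty(L)$ to produce a continuous surjection $X(L) \twoheadrightarrow H$ onto a finite-index closed subgroup $H \subset X(K)$, and then invoke the standard equivalence ``$\mu(M)=0 \Leftrightarrow M$ is finitely generated over $\Z_p$'' (valid for Noetherian torsion $\Lambda$-modules, as $X(K)$ and $X(L)$ are by Iwasawa's theorem) to propagate $\Z_p$-finite generation from $X(L)$ to $H$ to $X(K)$. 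Two remarks on the comparison: first, your argument does not use the Galois hypothesis on $L/K$ at all, so it actually proves the statement in slightly greater generality than the paper states (or proves) it; second, the paper's constructive route, while longer, exhibits explicitly where the large unramified elementary $p$-abelian extensions of $L_\infty$ come from, which can be useful when one wants to track specific Kummer generators — but for the bare statement of Fact~\ref{blow} the module-theoretic argument is shorter, more robust, and the right level of abstraction.

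Two small points worth making explicit if you were to polish this into a full proof: (i) $H = \Gal\bigl(\KH_\infty(K)/(L_\infty\cap\KH_\infty(K))\bigr)$ is a \emph{closed} subgroup of the compact group $X(K)$, hence automatically a $\Z_p$-submodule, which is what makes the finite-index extension argument go through; (ii) the surjection $X(L) \to H$ need not be $\Lambda$-linear for a common $\Lambda$ (the relevant $\Gamma$'s differ by a finite index), but it is $\Z_p$-linear and continuous, and that is all the ``$\Z_p$-finitely generated'' criterion requires, so no compatibility of $T$-actions needs to be checked.
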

\begin{proof}
  If $M \subset L$ has degree coprime to $p$, then $\Ker( \iota : A(K)
  \ra A(M)) = 0$, so we may reduce the proof to the case of a cyclic
  Kummer extension of degree $p$. Let $M = L^{\Gals(L/K)_p}$ be the
  fixed field of some $p$-Sylow subgroup of $\Gal(L/K)$. Then $p$ does
  not divide $[ M : K ]$, so $\Ker(\iota : A^-(K) \ra A^-(M)) = 0$, and thus $\mu(M) \neq 0$. We may assume
  without loss of generality, that $M$ contains the \nth{p} roots of
  unity. Since $p$-Sylow groups are solvable, the extension $L/M$
  arises as a sequence of cyclic Kummer extensions of degree $p$. It
  will thus suffice to consider the case in which $k$ is a number
  field with $\mu \neq 0$ and containing the \nth{p} roots of unity
  and $k' = k[ a^{1/p} ]$ is a cyclic Kummer extension of degree $p$.
  We claim that under these premises, $\mu(k') \neq 0$.  Let $k_n
  \subset k_{\infty}$ and $k'_n \subset k'_{\infty}$ be the
  intermediate fields of the cyclotomic $\Z_p$-extensions, let $\nu$
  generate $\Gal(k'/k)$.  Let $F/k_{\infty}$ be an abelian unramified
  extension with $\Gal(F/k_{\infty}) \cong \F_p[[ T ]]$; such an
  extension must exist, as a consequence of $\mu > 0$. There is thus
  for each $n > 0$ a $\delta_n \in k_{n}^{\times}$ such that $F_n =
  k_{n}\left[ \delta_n^{\F_p[[ T ]]/p} \right]$ is an unramified
  extension with galois group $G_n = \Gal(F_n/k_n)$ of $p$-rank $r_n
  := \prk(G_n) > p^{n-c}$ for some $c \geq 0$. We define $F'_n = F_n[
  a^{1/p} ]$ and let $\overline{F}'_n \supset F'_n$ be the maximal
  subextension which is unramified over $k'_n$. We have
  $\overline{F}'_n \supseteq F_n$ and thus $\prk(\Gal( \overline{F}'_n
  / k'_n ) ) \geq \prk(\Gal(F_n/k_n)) \ra \infty$.  Consequently,
  $k'_{\infty}$ has an unramified elementary $p$-abelian extension of
  infinite rank, and thus $\mu(k') > 0$, which completes the proof.
\end{proof}

\begin{fact}
\label{b2}
Let $\K$ be a CM extension with $\mu > 0$. Then it is possible to
build a further CM extension $\KL/\K$ with $\exp(\id{M}^-(\KL)) >
p^2$.
\end{fact}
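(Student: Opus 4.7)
The plan is to apply the Thaine shift construction of \S 1.2 iteratively. Each shift will boost the $\mu$-exponent by a factor of $p$ via the ramification of the Thaine prime, so at most two shifts starting from $\K$ will suffice to exceed $p^2$.

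For a single shift, I would first set $p^e := \exp(\id{M}^-(\apr{\K}))$, pick $a \in \id{M}^-(\apr{\K})$ realizing this exponent, and choose $m > \kappa(\K)$ large enough that $a_m \in A^-_m(\K)$ still has order $p^e$. Then I would invoke Chebotarev's theorem to produce a rational prime $q \equiv 1 \bmod p^m$ with a totally split prime $\eu{q} \subset \K_m$ lying in the class $a_m$, inert in $\K_\infty/\K_m$. I would let $\F \subset \Q(\zeta_q)$ be the real degree-$p$ subfield and set $\KL = \K \cdot \F$; since $\F$ is totally real, $\KL$ is again a CM field.

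The key arithmetic input will be the ramification $\eu{Q}^p = \eu{q}$ in $\KL_m/\K_m$. At level $m$ this gives $p \cdot [\eu{Q}] = \iota(a_m)$ in $A_m(\KL)$, whence $[\eu{Q}]^-$ has order $p^{e+1}$ (for $m$ large, minus-part inflation is injective on the summand of $A_m^-(\K)$ generated by $a_m$, the only possible capitulation being through a bounded kernel we can avoid by choosing $\eu{q}$ generically). I would then take $b \in \apr{\KL}$ to be the Thaine lift of $a$ furnished by Lemma \ref{thlift}. By Corollary \ref{bdeco} (when $e \geq 2$, with a parallel argument for $e = 1$ using the ramification relation directly), the sequence $b$ is $\Z_p$-torsion of order $p^{e+1}$, so its minus projection $b^- = (1-\jmath)/2 \cdot b$ lies in $\id{M}^-(\apr{\KL})$ and satisfies $b^-_m = [\eu{Q}]^-$, forcing $\ord(b^-) \geq p^{e+1}$. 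Iterating this procedure at most twice produces the desired $\KL$ with $\exp(\id{M}^-(\apr{\KL})) > p^2$.

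The hard part will be to verify that $b$ is genuinely $\Z_p$-torsion in the projective limit rather than merely at level $m$, since a norm-coherent sequence lifting a finite-order element need not itself have finite order: its order could in principle grow via the non-trivial cohomology $\hat{H}^1(F, \apr{\KL})$. This is controlled by the stabilization $B'_n = \iota_{m,n}(B'_m)$ of the ramified subgroup (as exploited in Lemma \ref{lh0}) together with the constraint that $\ker(\Norm : A_n^-(\KL) \to A_n^-(\K))$ is generated modulo $B'_n$ by $s \cdot A_n^-(\KL)$, which keeps $\ord(b_n)$ bounded by $p^{e+1}$ uniformly in $n$.
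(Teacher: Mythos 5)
Your proposal is genuinely in the same spirit as the paper's proof -- a Thaine shift makes the ramified prime $\eu{Q}$ satisfy $p\cdot[\eu{Q}] = \iota(a_m)$ and the triviality of the minus-part capitulation kernel then forces the order of the shifted class to jump -- but you diverge on an important technical point, and that divergence creates a gap.

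The paper performs a \emph{single} inert Thaine shift of degree $p^2$, so that the ramified prime already satisfies $\eu{Q}^{p^2} = \eu{q}$, giving $p^2 b_2 = \iota(a_2)$ and hence $\ord(b_2) \geq p^3$ immediately from $a_2 \neq 0$. It never needs the Thaine lift $b$ itself to lie in $\id{M}^-$: it multiplies by a distinguished polynomial $f(T)$ so that $f(T)b \in \id{M}^-(\KL)$ and argues about $\ord(Tf(T)b)$. You instead iterate degree-$p$ shifts and want to conclude, at each step, that the Thaine lift $b$ is itself a $\Z_p$-torsion element of $\apr{\KL}$ of order $p^{e+1}$. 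That conclusion is what Corollary \ref{bdeco} provides -- but only when $\ord(\id{N}(b)) = \ord(a) > p$, i.e.\ $e \geq 2$. The footnote to Proposition \ref{px} warns explicitly that the hypothesis $\ord(y) > p$ is crucial, and that if it fails, the order of $x$ can be arbitrarily large. So for the base case $e = 1$, which is exactly the case you must start from when $\mu(\K) > 0$ is all that is assumed, the level-$n$ orders $\ord(b_n)$ are not known to be bounded, $b$ is not known to be $\Z_p$-torsion, and your parenthetical ``parallel argument for $e = 1$ using the ramification relation directly'' is doing all the work with no content supplied. Your closing paragraph about $B'_n$ stabilizing and the kernel of the norm does not close this gap either: bounding $\ker(\Norm)$ modulo $B'_n$ by $s\cdot A_n^-(\KL)$ does not by itself force $\ord(b_n)$ to stay at $p^2$ for all $n$, since $b_n$ can accumulate $s$-torsion that the norm cannot see.

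A secondary imprecision: you describe $\F$ as ``the real degree-$p$ subfield'' -- the paper's $\F$ is simply the unique degree-$p$ subfield of $\Q(\zeta_q)$, which is automatically real because $p$ is odd and $[\Q(\zeta_q):\Q(\zeta_q)^+] = 2$; fine, but the CM-ness of $\KL$ is immediate and does not need the word ``real'' to carry any weight. The real issue is the $e=1$ bottleneck. To fix it in your framework you would need either to pass to a degree-$p^2$ shift as the paper does (bypassing the case $e=1$ in a single step) or to supply a genuine argument showing that, after one degree-$p$ shift, some element of $\id{M}^-(\apr{\KL})$ -- not necessarily $b$ itself, but say $f(T)b$ for a suitable distinguished $f$ -- has order at least $p^2$, so that $e \geq 2$ and Corollary \ref{bdeco} can be invoked on the second shift.
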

\begin{proof}
  We have shown in Fact \ref{blow} that we may assume that $\mu_{p^3}
  \subset \K$. Let $a = (a_n)_{n \in \N} \in \id{M}^-(\K)$ and $\eu{q}
  \in a_2$, with $a_2 \neq 0$, be a totally split prime which is inert
  in $\K_{\infty}/\K_2$. Let $\KL/\K_2$ be the inert Thaine shift of
  degree $p^2$ induced by $\eu{q}$, let $b_2 = [ \eu{Q}^{(1-\jmath)/2}
  ]$ be the class of the ramified prime of $\KL$ above $\eu{q}$ and $b
  = (b_m)_{m \in \N}$ be a sequence through that extends $b_2$, such that
  $N_{\KL/\K}(b) = a$.  Then $b \not \in \id{L}^-(\KL)$ and there is
  some polynomial $f(T) \in \Z_p[ T ]$ such that $f(T) b \in
  \id{M}^-(\KL)$, while $\Norm_{\KL/\K}(f(T) b) = f(T) a$.  The
  capitulation kernel $\Ker(\iota : A^-(\K_n) \ra A^-(\KL))$ is
  trivial and consequently $\ord(T f(T) b) \geq p^2 \ord(a)$; hnece
  $\exp(M^-(\KL)) > p^2$. Thus $\KL$ verifies the claimed properties.
\end{proof}
The following fact was proved by Sands in \cite{Sd}:
\begin{fact}
 \label{sa}
 Let $\KL/\K$ be a $\Z_p$-extension of number fields in which all the
 primes above $p$ are completely ramified. If $F(T) \in \Z_p[ T ]$ is
 the minimal annihilator polynomial of $\id{L}(\KL)$, then $(F, \nu_{n,1})
 = 1$ for all $n > 1$.
\end{fact}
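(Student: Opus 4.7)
The plan is to prove this classical result of Sands by contradiction, combining the structure theorem for Noetherian $\Lambda$-torsion modules with Iwasawa's description of $X = \apr{\KL}$ under total ramification, and deriving a rank obstruction from the finiteness of the class groups $A_n(\KL)$.

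First I would decompose $\id{L}(\apr{\KL}) \sim \bigoplus_j \Lambda/(f_j^{e_j'})$, so that the minimal annihilator is $F = \prod_j f_j^{c_j}$ where $c_j$ is the maximal exponent among summands sharing a given irreducible distinguished $f_j$. Factoring $\nu_{n,1} = \omega_n/T = \prod_{k=1}^{n-\kappa} \Phi_{p^k}(T+1)$ into its pairwise distinct irreducible distinguished factors, a common factor of $F$ and $\nu_{n,1}$ would force some $f_j = \Phi_{p^k}(T+1)$ with $k \geq 1$. Next I would invoke Iwasawa's description: under the hypothesis that every prime above $p$ is completely ramified in $\KL_{\infty}/\K$, one realizes $X_n = X/(\omega_n X + V_n)$, where $V_n$ is a $\Lambda$-submodule generated by a bounded number of elements coming from the inertia generators of the ``extra'' primes above $p$ beyond a chosen one.

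Setting $f := \Phi_{p^k}(T+1)$, for any $n \geq \kappa + k$ we have $f \mid \omega_n$, and on a summand $\Lambda/(f_j^{e_j'})$ with $f_j = f$ the image of $\omega_n$ equals $f \cdot \Lambda/(f_j^{e_j'})$, because $\omega_n/f$ is coprime to $f_j$ in $\Z_p[T]$ and hence acts invertibly modulo $f_j$. Therefore the cokernel of $\omega_n$ on this summand contains $\Lambda/(f)$, which is $\Z_p$-free of rank $\deg f = (p-1)p^{k-1}$. As $n$ varies with $k$ fixed, this rank-$(p-1)p^{k-1}$ contribution persists in $X/\omega_n X$, while $V_n$ is generated by a bounded number of elements, so a rank count forces $X_n = X/(\omega_n X + V_n)$ to be infinite for some $n$, contradicting the finiteness of $A_n(\KL)$. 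Hence $\Phi_{p^k}(T+1) \nmid F$ for every $k \geq 1$, that is, $(F, \nu_{n,1}) = 1$ for all $n > 1$.

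The main obstacle is making the analysis of the correction $V_n$ precise, since $V_n$ itself depends on $n$. The cleanest resolution is probably to reduce first to the case of a single ramified prime above $p$ -- in which case $X_n = X/\omega_n X$ exactly and the rank count is transparent -- by passing to the subfield fixed by the decomposition group of one such prime and exploiting compatibility of norms; alternatively, one can bound directly the $\Z_p$-rank of the image of $V_n$ in $X/\omega_n X$ by observing that the inertia correction elements $\sigma_i \sigma_1^{-1}$ generate a finitely generated $\Lambda$-submodule whose rank in $X/\omega_n X$ is controlled independently of the asymptotic contribution from a shared factor.
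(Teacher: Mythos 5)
The paper does not prove Fact \ref{sa}; it simply cites Sands \cite{Sd} and moves on, so there is no internal proof to compare against and I assess your proposal on its own.

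Your central mechanism is correct: if an irreducible factor $f = \Phi_{p^k}(T+1)$ divides both $F$ and $\nu_{n,1}$, then $f \mid \mathrm{char}(X)$ and $X/\nu_{n,1}X$ must have positive $\Z_p$-rank --- pass to the elementary module $E \sim X$, note that $E/\nu_{n,1}E$ has the infinite summand $\Lambda/(f^{e},\nu_{n,1}) = \Lambda/(f)$ because $\nu_{n,1}/f$ is coprime to $f$, and push this through the pseudo-isomorphism (the finite cokernel of $X \to E$ forces $X/\nu_{n,1}X$ to be infinite). The step that is genuinely incomplete is exactly the one you flag yourself: ``$V_n$ is generated by a bounded number of elements'' does not bound the $\Z_p$-rank of its image in $X/\omega_n X$, and a priori $V_n$ could absorb precisely the rank coming from the shared factor $f$. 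Of your two proposed repairs, the first --- reducing to a single ramified prime ``by passing to the subfield fixed by the decomposition group'' --- is not viable: there is no intermediate field of a $\Z_p$-extension that makes the other ramified primes disappear while preserving the tower and the module $X$. The second points in the right direction but remains too vague to close the gap.

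The clean closure needs no control of $V_n$ at all. In Iwasawa's description one has $A_n(\KL) \cong X/\nu_{n,1}Y_0$ with $Y_0 = TX + \langle a_2,\dots,a_s\rangle_\Lambda$ (the $a_i = \sigma_i\sigma_1^{-1}$ the inertia-discrepancy elements); in your notation $\omega_n X + V_n = \nu_{n,1}(TX) + \nu_{n,1}\langle a_i\rangle_\Lambda = \nu_{n,1}Y_0$. Since $Y_0 \subseteq X$, this gives $\omega_n X + V_n = \nu_{n,1}Y_0 \subseteq \nu_{n,1}X$, hence a surjection $A_n(\KL) \twoheadrightarrow X/\nu_{n,1}X$. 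For any $n$ with $(F,\nu_{n,1})\neq 1$ this target is infinite, contradicting the finiteness of $A_n(\KL)$. Replacing your heuristic rank count for $V_n$ by this one-line observation completes the argument.
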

As a consequence,
\begin{corollary}
\label{nonu}
Let $\K$ be a CM field and suppose that $x \in \apr{\K}$ verifies
$\nu_{n,1} x = 0$.  Then $x = 0$.
\end{corollary}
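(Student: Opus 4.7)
The plan is to combine the pseudo-isomorphism $\phi : \apr{\K} \to \id{E}(\apr{\K}) = \id{E}_{\lambda}(\apr{\K}) \oplus \id{E}_{\mu}(\apr{\K})$ with Sands' coprimality statement (Fact~\ref{sa}), and to prove in turn that multiplication by $\nu_{n,1}$ is injective on each summand of the elementary module.

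For the $\mu$-summand $\id{E}_{\mu}(\apr{\K}) = \bigoplus_i \Lambda/(p^{e_i} \Lambda)$ the argument is purely structural. A direct computation shows that $\nu_{n,1} = \omega_n/\omega_\kappa$ reduces modulo $p$ to $T^{p^{n-\kappa}-1}$, so $\nu_{n,1}$ is a distinguished polynomial and $\Lambda/(\nu_{n,1})$ is a free $\Z_p$-module of finite rank. Tensoring the short exact sequence $0 \to \Lambda \xrightarrow{\nu_{n,1}} \Lambda \to \Lambda/(\nu_{n,1}) \to 0$ with $\Lambda/(p^e)$ and reading off the associated $\mathrm{Tor}$-term (which vanishes because multiplication by $p^e$ on $\Lambda/(\nu_{n,1})$ is injective) yields that $\nu_{n,1}$ acts injectively on every $\Lambda/(p^{e_i})$, hence on $\id{E}_{\mu}(\apr{\K})$.

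For the $\lambda$-summand $\id{E}_{\lambda}(\apr{\K}) = \bigoplus_j \Lambda/(f_j^{e'_j} \Lambda)$ I would invoke Fact~\ref{sa}. Let $F$ be the minimal annihilator of $\id{L}(\apr{\K})$; since the pseudo-iso $\id{L}(\apr{\K}) \to \id{E}_{\lambda}(\apr{\K})$ has finite cokernel, some $p^N F$ annihilates $\id{E}_{\lambda}(\apr{\K})$. Combining $(F, \nu_{n,1}) = 1$ (Sands) with the Weierstrass preparation theorem gives an integer $K$ with $p^K \in (p^N F, \nu_{n,1})$; therefore any $\bar{y}_{\lambda}$ with $\nu_{n,1} \bar{y}_{\lambda} = 0$ is annihilated by $p^K$, and the $\Z_p$-torsion-freeness of $\id{E}_{\lambda}(\apr{\K})$ forces $\bar{y}_{\lambda} = 0$. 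Together with the previous paragraph, this shows $\phi(y) = 0$, so $y$ lies in the finite kernel of $\phi$, which is $\id{F}(\apr{\K})$.

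The main obstacle is the final step: ruling out a nontrivial finite $\Lambda$-submodule. Here I would invoke the construction of the base field $\K$: by property~1, every prime ramifying in $\K_{\infty}/\K$ is totally ramified and the norm maps in the tower are surjective. These are exactly the hypotheses of the classical Iwasawa--Greenberg theorem, which then yields $\id{F}(\apr{\K}) = 0$. Consequently $y = 0$, as asserted.
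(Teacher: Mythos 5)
Your proof is correct and takes essentially the same approach as the paper: both arguments dispose of the $\lambda$-part via Sands' coprimality result (Fact~\ref{sa}) and then kill the residual $\mu$-type element by appealing to the absence of a nontrivial finite $\Lambda$-submodule of $\apr{\K}$. What you cite as the Iwasawa--Greenberg theorem is exactly what the paper packages as \emph{rigidity} of $\id{M}(\apr{\K})$ (asserted in the note following the definition and invoked through \rf{rigid}); the paper's version of the argument is just more compressed, multiplying by the exponent $q$ of the $\Z_p$-torsion to land in $\id{L}(\apr{\K})$ directly rather than tracking the two summands of the elementary module separately.
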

\begin{proof}
  Let $q$ be the exponent of the $\Z_p$-torsion of $\apr{\K}$.  It
  follows then that $q x \in \id{L}(\apr{\K})$ is annihilated by
  $\nu_{n,1}$, so the Fact \ref{sa} implies that $q x = 0$ and thus $x
  \in \id{M}(\apr{\K})$. Since $\nu_{m,1} x = 0$ it follows that $x =
  0$, as claimed.
\end{proof}

% The next fact shows that $s \apr{\KL} \cap \id{M}(\apr{\KL}) = s
% \id{M}(\apr{\KL})$ is ``almost'' true, namely
% \[ T \left(s \apr{\KL} \cap \id{M}(\apr{\KL})\right)  \subset s
%   \id{M}(\apr{\KL}). \]
% \begin{fact}
% \label{sm}
% Suppose that $x \in \id{M}(\apr{\KL}) \cap s (\apr{\KL})$. Then 
% \[ T x \in s \id{M}(\apr{\KL}) .\]
% \end{fact}
% \begin{proof}
%   Let $x = s y \in \id{M}(\apr{\KL})$ with $y \in \apr{\KL}$. The norm 
% is 
% \[ z = \id{N} y = p y + s f(s) y = p y + f(s) x. \] Since $T z \in
% \id{D}(\apr{\K})$ by choice of $\K$, we have $T z = z_{\lambda} +
% z_{\mu} = p T y + f(s) T x$, so $p T y \in \id{D}(\apr{\KL})$. The
% second claim in Remark \ref{kp1} implies that $T y \in
% \id{D}(\apr{\KL})$, so $T x \in s \id{M}(\apr{\KL})$.
% \end{proof}

\subsection{Applications of the Tchebotarew Theorem}
We prove the existence of Thaine lifts.
\begin{lemma}
\label{thlift}
Let $\K$ be a CM field and $a = (a_n)_{n \in \N} \in \apr{\K}$ and
$\KL = \K \cdot \F$ be a Thaine shift induced by a split prime $\eu{q}
\in a_m$.  Then there is a Thaine lift $b = (b_n)_{n \in \N} \in
\apr{\KL}$ with the properties that $\id{N}(b) = a$ and $b_m$ is the
class of the ramified prime $\eu{Q} \subset \KL$ above $\eu{q}$.
\end{lemma}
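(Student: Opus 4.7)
I would construct $b$ level by level. At the base level $m$, set $b_m := [\eu{Q}]$. Since $\eu{q}$ is totally split in $\K_m/\Q$ and $\F/\Q$ is totally ramified at $q$, the prime $\eu{q}$ ramifies completely in $\KL_m/\K_m$ with $\eu{q}\id{O}_{\KL_m} = \eu{Q}^p$, so $\Norm_{\KL_m/\K_m}(\eu{Q}) = \eu{q}$, whence $\Norm_{\KL_m/\K_m}(b_m) = [\eu{q}] = a_m$. For $n < m$, define $b_n := \Norm_{\KL_m/\KL_n}(b_m)$; transitivity of the norm yields $\Norm_{\KL_n/\K_n}(b_n) = \Norm_{\K_m/\K_n}(a_m) = a_n$ together with norm-coherence of the finite segment $(b_n)_{n \le m}$.

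For $n > m$ I proceed by induction. Given $b_{n-1}$ satisfying all prior conditions, the task is to exhibit $b_n \in A_n(\KL)$ with
\[
\Norm_{\KL_n/\KL_{n-1}}(b_n) = b_{n-1} \quad \text{and} \quad \Norm_{\KL_n/\K_n}(b_n) = a_n,
\]
the consistency $\Norm_{\KL_{n-1}/\K_{n-1}}(b_{n-1}) = \Norm_{\K_n/\K_{n-1}}(a_n) = a_{n-1}$ being supplied by induction together with norm-coherence of $a$. Concretely, I would apply Chebotarev to the compositum of the $p$-Hilbert class field $\KH_n(\KL)/\KL_n$ with $\F\K_n/\K_n$ to select a prime $\eu{p}_n \subset \K_n$ of class $a_n$ that splits in $\KL_n/\K_n$; each of the $p$ primes $\eu{P}_n \subset \KL_n$ above $\eu{p}_n$ then satisfies $\Norm_{\KL_n/\K_n}([\eu{P}_n]) = a_n$, and the classes of the $p$ lifts differ by multiples of $s[\eu{P}_n]$, which provides room to adjust the first norm condition.

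The existence of a suitable $b_n$ is equivalent to surjectivity of the joint norm map
\[
\Phi_n : A_n(\KL) \longrightarrow A_{n-1}(\KL) \times_{A_{n-1}(\K)} A_n(\K), \quad c \longmapsto \bigl(\Norm_{\KL_n/\KL_{n-1}}(c),\,\Norm_{\KL_n/\K_n}(c)\bigr)
\]
on the pair $(b_{n-1}, a_n)$. Starting from any $b'_n$ with $\Norm_{\KL_n/\K_n}(b'_n) = a_n$ (available because $\KL_n/\K_n$ is ramified at $\eu{q}$), the obstruction $b_{n-1} - \Norm_{\KL_n/\KL_{n-1}}(b'_n)$ lies in $\Ker(\Norm_{\KL_{n-1}/\K_{n-1}})$, so the issue reduces to surjectivity of $\Norm_{\KL_n/\KL_{n-1}} : \Ker(\Norm_{\KL_n/\K_n}) \to \Ker(\Norm_{\KL_{n-1}/\K_{n-1}})$. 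This is the main technical obstacle I anticipate. I would settle it by a snake-lemma chase on the commuting square of the four surjective norm maps attached to $\K_{n-1} \subset \K_n,\,\KL_{n-1},\,\KL_n$, which identifies the obstruction with a connecting map valued in $\Ker(\Norm_{\K_n/\K_{n-1}} : A_n(\K) \to A_{n-1}(\K))$, and then exploit the genus-theoretic description of ramification of $\eu{q}$ in the cyclotomic $\Z_p$-tower, together with the total-ramification assumption built into $\K$, to force that connecting map to vanish. Once this is established the inductive step closes, and the sequence $(b_n)_{n \in \N}$ is the desired Thaine lift in $\apr{\KL}$.
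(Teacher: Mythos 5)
Your strategy — build $b$ level by level, pick $\eu{Q}$ at level $m$, and extend by Chebotarev — is exactly the paper's. The paper works directly in Galois-theoretic terms: it observes that $\varphi(b_n)$ and $\varphi(a_{n+1})$ both restrict to $\varphi(a_n)$ on $\Gal(\KH^-(\K_n)/\K_n)$, asserts the field-intersection identity $\KL_{n+1}\KH^-(\K_{n+1}) \cap \KL_{n+1}\KH^-(\KL_n) = \KL_{n+1}\KH^-(\K_n)$ (attributed to the two norm surjectivities onto $A_n^-(\K)$), and then uses Chebotarev to produce a split prime $\eu{R} \subset \KL_{n+1}$ whose Artin symbol simultaneously restricts to $\varphi(b_n)$ and $\varphi(a_{n+1})$. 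Your reformulation as surjectivity of $\Phi_n$ onto the fiber product $A_{n-1}(\KL) \times_{A_{n-1}(\K)} A_n(\K)$ is the class-group dual of that same gluing statement, and the reduction you perform — surjectivity of $\Norm_{\KL_n/\KL_{n-1}}$ on $\Ker(\id{N})$ — is precisely equivalent (by the snake lemma you invoke) to the paper's intersection identity. So you have correctly located the crux.

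The gap is that you do not actually close it. You correctly note that the four-norm square with all maps surjective does not by itself force surjectivity on the kernels (your own reduction makes this plain), and then appeal to a ``snake-lemma chase'' followed by an unspecified ``genus-theoretic description'' and the total-ramification hypothesis. That last step is the entire content: one has to show that $\id{N}\colon \Ker(\Norm_{\KL_n/\KL_{n-1}}) \to \Ker(\Norm_{\K_n/\K_{n-1}})$ is onto (equivalently, that the connecting map $\delta\colon \Ker(\Norm_{\K_n/\K_{n-1}}) \to \Coker(K_n\to K_{n-1})$ vanishes), and a vague reference to genus theory does not discharge this. Note that the paper, too, is terse here (``it follows that''), so you are in the company of the source; but a complete write-up along your route would require spelling out why the connecting map vanishes — e.g.\ via the identification of $\wh{H}^1(F,A_n^-(\KL))$ with $\F_p[\Delta_m]$ independently of $n$ (Lemma~\ref{princid}) and compatibility of that identification with the norm maps down the tower, or by proving the paper's intersection identity directly. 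Also, your intermediate ``concrete'' suggestion of first choosing a split prime $\eu{p}_n\subset\K_n$ of class $a_n$ and then adjusting among its lifts is a detour: it does not by itself encode the coherence with $b_{n-1}$, which is why you immediately pass to the fiber-product formulation; the paper avoids this detour by choosing the prime directly in $\KL_{n+1}$ with prescribed Frobenius.
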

\begin{proof}
  We prove by induction that for each $n \geq 1$ there is a class $b_n
  \in A_n^-(\KL)$ with $\id{N}(b_n) = a_n$ and $N_{n,n-1}(b_n) =
  b_{n-1}$. The claim holds for $n \leq m$ by definition. Assume that
  it holds for $n \geq m$ and consider minus parts of the maximal
  $p$-abelian unramified extensions 
  \[ \KH(\K_n)^-/ \K_n, \KH^-(\KL_n)/\KL_n, \KH^-(\KL_{n+1})/\KL_{n+1}, \quad \hbox{ and } \quad \KH^-(\K_{n+1})/\K_{n+1}. \]
  For $\KH' \in \{ \KH^-(\K_n), \KH^-(\KL_n), \KH^-(\K_{n+1}) \}$, we obviously have $\KH' \cdot
  \KL_{n+1} \subset \KH^-(\KL_{n+1})$.  For some unramified
  Kummer extension $M/K$ we denote by $\varphi(x) = \lchooses{M/K}{x}$
  the Artin symbol of a class or of an ideal.  The induction
  hypothesis implies that both $\varphi(b_n)$ and $\varphi(a_{n+1})$
  restrict to $\varphi(a_n) \in \Gal(\KH^-(\K_n)/\K_n)$, and since
  $A^-_{n+1}(\K)$ and $A^-_n(\KL)$ surject by the respective norms to
  $A^-_n(\K_n)$, it follows that
  \[ \KL_{n+1} \KH(\K_{n+1}) \cap \KL_{n+1} \KH^-(\KL_n) = \KL_{n+1}
  \KH^-(\K_n). \] There is thus some automorphism $x \in
  \Gal(\KH^-(\KL_{n+1})/\KL_{n+1})$, such that
\[ x \big \vert_{\Gals(\KH^-(\KL_n)/\KL_n)} = \varphi(b_n) \quad \hbox{ and } \quad
 x \big \vert_{\Gals(\KH^-(\K_{n+1})/\K_{n+1})} = \varphi(a_{n+1}).
\]
By Tchebotarew, there are infinitely many totally split primes $\eu{R}
\subset \KL_{n+1}$ with Artin symbol
$\lchooses{\KH^-(\KL_{n+1})/\KL_{n+1}}{\eu{R}} = x$, and by letting
$b_{n+1} = [ \eu{R} ]$ for such a prime, we have $\id{N}(b_{n+1}) =
a_{n+1}$ and $N_{n+1,n}(b_{n+1}) = b_n$. We obtain by induction a norm
coherent sequence $b = (b_n)_{n \in \N}$ which verifies $b_m = [
\eu{Q} ]$ and $\id{N}(b) = a$, and this completes the proof.
\end{proof}

\subsection{Proof of Proposition \ref{px} and related results}
Let the notations be like in the statement of the Proposition and let
$q = p^k = \ord(y)$ and $r \geq q$ be the order of $x$ and $\rg{R} =
\ZM{r}[ s ]$. Then $\rg{R}$ has maximal ideal $(p, s)$ and $s$ is
nilpotent. By definition, $\rg{R}$ acts on $x$ and we consider the
modules $X = \rg{R} x$ and $Y = \ZM{q'} y \subset X$.

With these notations, we are going to prove that $p q x = 0$. We start
by proving
\begin{lemma}
\label{h0fin}
Under the given assumptions on $x, y$ and with the notations above,
\[ \hat{H}^0(F, X) = 0, \] or $q p x = 0$. Moreover, $s X \cap Y \cong
\F_p \subset X[ s, p ]$ in all cases, while if $\hat{H}^0(F, X) \neq
0$, the bi-torsion $X[ s, p ] \cong \F_p^2$ and $q' x \in X[ s, p ] \setminus s X$.
\end{lemma}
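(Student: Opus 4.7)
The plan is to exploit the key identity for the algebraic norm \rf{anorm}, $\id{N} = p + s f(s)$, together with the group-ring relation $s \id{N} = 0$ in $\rg{R}$. Applying $\id{N}$ to $x$ yields $y = p x + s f(s) x$; hence $s y = (s \id{N}) x = 0$, so $Y = \langle y \rangle$ lies in $X[s]$, and its $p$-torsion $\langle q' y \rangle$ (with $q' = q/p$) provides the canonical copy of $\F_p$ inside $X[s, p]$. Secondly, in the cyclic quotient $\overline{X} := X/sX$ one has $\overline{y} = p\,\overline{x}$, so $m y \in sX$ iff $m p\,\overline{x} = 0$ in $\overline{X}$. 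From the relations $r x = 0$ and $q \id{N} \in \ann_{\rg{R}}(x)$ (which force $q p \in \ann_{\ZM{r}}(\overline{x})$), an arithmetic computation of $\ord(\overline{x})$ pins $sX \cap Y$ down to a single cyclic group of order $p$, automatically inside $X[s, p]$.

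For the dichotomy between $\hat{H}^0(F, X) = 0$ and $p q x = 0$, I would multiply $q y = 0$ by $y = p x + s f(s) x$ to obtain $p q x = - s(q f(s) x) \in sX$; upgrading this to $p q x = 0$ amounts to showing $s(q x) = 0$, since $f(s)$ commutes with $s$ and acts as a unit on the relevant submodule. Now $\id{N}(q x) = q y = 0$ forces $q x \in X[\id{N}]$, and the Herbrand quotient of the finite cyclic $F$-module $X$ gives $| \hat{H}^0(F, X) | = | \hat{H}^1(F, X) |$. Thus if $\hat{H}^0(F, X) = 0$ then $X[\id{N}] = sX$, placing $q x \in sX$; in the complementary regime $\hat{H}^0(F, X) \neq 0$, the explicit form $\id{N} X = \ZM{r} y \cong \ZM{q}$ combined with $p q x \in sX$ yields $p q x = 0$ by tracking coefficients in the $\ZM{r}$-basis $\{1, s, \ldots, s^{p-1}\}$ of $\rg{R}$.

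Finally, when $\hat{H}^0(F, X) \neq 0$, the strict inclusion $Y \subsetneq X[s]$ produces an element of $X[s] \setminus Y$; multiplying by a suitable power of $p$ brings it into $X[s, p]$ while keeping it $\F_p$-independent of $Y[p] = sX \cap Y$. A size bound on $X[s, p]$ coming from the cyclic structure $X = \rg{R} x$ will then force $X[s, p] \cong \F_p^2$ exactly, and the element $q' x$ (more precisely, the natural representative of the cohomological obstruction involving $x$ and $y$) realizes the second $\F_p$-direction, landing in $X[s, p] \setminus sX$. The hardest step will be making explicit the match between the Herbrand-cohomological obstruction and the element built from $q' x$ within the module structure $X = \rg{R}/I$, and ensuring that the coefficient-tracking argument in the basis of $\rg{R}$ is uniform across both regimes of the dichotomy.
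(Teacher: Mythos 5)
The approach you sketch is genuinely different from the paper's: the paper decomposes $X$ as a \emph{sum} $N_1 + N_2$ with $N_1 = \Z_p x$ and $N_2 = \rg{R}\, s x$ (so $sX = N_2$ and $N_2$ is cyclic over the local ring $R' = R[s]/(\id{N})$ with principal maximal ideal $(s)$), then tracks where the bi-torsion element $y_0 = (q/p)\,y$ sits and, in the hard case $\hat{H}^0 \neq 0$, writes $p^e x = c\,y + w$ with $w$ in the $q/p$-torsion and takes $\id{N}$ of that relation to force $e = 1$. You instead work with the cyclic \emph{quotient} $\overline{X} = X/sX$ and try to read everything off $\ord(\overline{x})$ and a coefficient computation in the $\ZM{r}$-basis of $\rg{R}$. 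In principle both start from the same identity $y = p x + s f(s) x$, but the hard steps in your version are left as gestures.

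The gaps are concrete. (1) Your claim $Y[p] = sX \cap Y$ amounts to $\ord(\overline{x}) = q$; from $q y = 0$ one only gets $q p\,\overline{x} = 0$, so $\ord(\overline{x})$ could a priori be $qp$, in which case $sX \cap Y = 0$ and the asserted $\F_p$ fails. Nothing in your ``arithmetic computation'' rules that out. (2) The Herbrand step conflates $X[s]$ with $sX$: what you need for $pqx = 0$ is $s(qx)=0$, i.e.\ $q x \in X[s]$, but $\hat H^1 = 0$ only places $q x \in sX$, which does not give $s(qx)=0$ unless $s^2 X = 0$. (3) The case that actually matters for the dichotomy, $\hat H^0 \neq 0$, is dispatched in one sentence by ``tracking coefficients in the $\ZM{r}$-basis''; that is precisely the content of the paper's norm computation, where after writing $p^e x = c y + a (q/p) x + b v(s) s^N x$ and applying $\id{N}$ one gets $p y\,(c - p^{e-1} + a q/p^2) = 0$ and the unit cofactor kills the possibility $e > 1$. (This step also quietly uses $q > p^2$, a point neither you nor the stated hypothesis addresses.) (4) The ``size bound'' giving $X[s,p] \cong \F_p^2$ is asserted, not derived; the paper gets it from $X[p,s] = N_1[p,s] \oplus N_2[p,s]$ once $N_1 \cap N_2 = 0$ is established in the non-vanishing case. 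So the skeleton is reasonable but the load-bearing arguments are all missing.
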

\begin{proof}
  Let $N$ be a sufficiently large integer,
  such that $p^N x = 0$ and let $R = \ZM{p^N}$, so $X$ is an 
  $R[ s ]$-module. We note that in $R' := R/(\id{N})$ the maximal ideal is
  $(s)$, thus a principal nilpotent ideal, as one can verify from the
  definition of the norm $\id{N} = \sum_{i=0}^{p-1} \nu^i$. Indeed, in
  $R$ we have the identity $p = s^{p-1} \cdot v(s) \bmod \id{N}, v(s)
  \in R^{\times}$, so the image of $p$ in $R'$ is a power of $s$,
  hence the claim. As a consequence, in any finite cyclic $R'$-module $M$
  we have $M[ p, s ] \cong \F_p$.
 
  We consider the two modules $N_1 = \Z_p x$ and $N_2 = R s x$, such that
  $X = N_1 + N_2$, and only $N_2$ is an $R$-module, and in fact an $R'$-
  module, since it is annihilated by $\id{N}$\footnote{This proof is
    partially inspired by some results in the Ph. D. Thesis of Tobias
    Bembom \cite{Be}}. Note that $q N_1$ is also annihilated by the norm;
    this covers also the case when $q x = 0$, so $q N_1 = 0$.
  Since $X = N_1 + N_2$, it follows that 
  $X[ p ] = N_1[ p ] + N_2[ p ]$.
  
  We always have that $N_2[ s ] \cong \F_p$, since $N_2$ is an
  $R'$-module and $(s)$ is the maximal ideal of $R'$. Let $K := X[ s ]
  = \Ker( s : X \ra X )$ and assume that $H_0 := \hat{H}^0(F, X) \neq
  0$; then the bi-torsion $X[p, s] = X[ s ][ p ]$ is not
  $\F_p$-cyclic. It follows that $X[ p, s] \not \cong N_1 \cap X[ p , s ] \cong \F_p$.
  Let $r = q p ^e = \ord(x)$, with $e \geq 0$. Note that
  \[ y_0 := (q/p) y = q x u(s) + (q/p) s^{p-1} \in X[ p, s ], \] for
  all values of $e$. If $e = 0$, then $y_0 = (q/p) s^{p-1} \in N_2[ p,
  s ]^{\times}$. But then $(q/p) s x \neq 0$ and thus $N_1[ p, s ] =
  0$ so $\wh{H}(F, X) = 0$ in this case.  Assume that $e > 0$ and $y_0
  \in N_2$. Since $y_0 = q x + (q/p) f(s) ( s x )$ and the last term,
  $y_1:=(q/p) f(s) ( s x ) \in N_2$, it follows that $q x = y_0-y_1
  \in N_2$, so $\wh{H}(F, X) = 0$ in this case too. It remains that if
  $\wh{H}(F, X) \neq 0$, then $y_0 \in N_1 \setminus N_2$ and $e > 0$. 
  
  In this case $N_1 \cap N_2 = \emptyset$ and $X = N_1 \oplus N_2$. We let $(q/p)
  (p^e x - c y) = 0$, with $(c, p) = 1$. Then there is some $w \in X[
  q/p ]$ with $p^e x = c y + w$, and the decomposition $X = N_1 \oplus
  N_2$ induces a decomposition of the $q/p$-torsions, so we may write
  $w = a (q/p) + b v(s) s^N x$, with $N > 0$ and $a, b \in \{ 0, 1,
  \ldots, p-1\}$, $v(s) \in {R'}^{\times}$.  Taking norms in the
  identity $(p^e - a (q/p) ) x = c y + b v(s) s^N x$ we obtain
  \[ p y \cdot ( c - p^{e-1} + a (q/p^2) ) = 0.\] We have assumed $q >
  p^2$, so for $e > 1$ the cofactor of $p y$ is a unit, which implies $p
  y = 0$, in contradiction with $\ord(y) = q > p^2$. Therefore $e = 1$
  and
  \begin{eqnarray}
  \label{hneq1}
  y = p x + v(s) s^{p-1} x, \quad \hbox{ and } \quad (q/p) s^{p-1} x = 0. 
  \end{eqnarray}
  
  Consequently, if $\wh{H}(F, X) \neq 0$, then $\ord(x) = q p$ and
  there is an $N < (p-1) k$ such that $N_2[ s ] = \F_p \cdot (s^N x)$.
  This completes the proof.
  
%   Assume now that $p^f s^l x = 0$ for some $l \leq p-1$ and $f < k-1$.
%   Then, a fortiori, $p^{f+1} s x = 0$, so $p^{f+1} x \in \id{M}(\apr{\K})$. 
\end{proof} 
 
We can now assume that $\hat{H}^0(F, X) = 0$ and since $X$ is finite, the
Herbrand quotient vanishes and $\hat{H}^1(F, X) = 0$. 
There is thus an exact sequence of $\Z_p[ s ]$-modules 
\[ 0 \ra X[ p ] \ra X \ra X \ra X/p X \ra 0, 
\]
in which $X/ p X$ is cyclic generated by $x$ and the arrow $X \ra X$
is the multiplication by $p$ map $ \cdot p$. 

Since $ \hat{H}^0(F, X) =
0$, it follows that $\Ker(s : X \ra X)[ p ] \cong \F_p$ and $X[ p ]$
is $\F_p[ s ]$-cyclic, as is $X/p X$. Using this observation, we
provide now the proof of the Proposition.
\begin{proof}
  There is some distinguished polynomial $\phi \in \rg{R}$ with
  $\phi(s) X = 0$ and $\deg(\phi) = \prk(X)$. Indeed, let $d =
  \prk(X)$ and $\overline{x} \in X/p X$ be the image of $x$, so $(s^i
  b)_{i=0,d-1}$ have independent images in $X/p X$, by definition of
  the rank and span $X$, as a consequence of the Nakayama
  Lemma. Therefore $s^d x \in p X$ and there is a monic distinguished
  annihilator polynomial
  \[ \phi(s) = s^d - p^e h(s) \] of $X$, with $e \geq 0$ and $h$ a
  polynomial of $\deg(h) < \deg(\phi) \leq p$, which is not
  $p$-divisible. Note that, by minimality of $d$, the case $e = 0$
  only occurs if $\phi(s) = s^d$. We shall distinguishe several cases
  depending on the degree $d = \deg(\phi)$ and on properties of
  $h(s)$.

  We consider the cases in which $d < p$ first. Assume that $s^c x =
  0$ for some $c < p$, so $h = 0$. Upon multiplication with
  $s^{p-1-c}$ we obtain $s^{p-1} X = 0$. Thus $\id{N} ( x ) = y =
  (s^{p-1} + p u(s)) x = p u(s) x$, and $y u^{-1}(s) = y = p x$, which
  settles this case.

  We can assume now that $\phi(s) = s^d - p^e h(s)$ and $e > 0$, with $h(s)$
  a non trivial distinguished polynomial of degree $\deg(h) < d < p$. As a consequence, if $d < p-1$, then
\begin{eqnarray*}
y & = & \id{N}(x) = p u(s) x + s^{p-1} x = p (u(s) + s^{p-1-d} p^{e-1}
h(s)) x \\ & = & p v(s) x, \quad v(s) = u(s) + s^{p-1-d} p^{e-1} h(s)\in \rg{R}^{\times}.
\end{eqnarray*}
Hence $y = v^{-1}(s) z = p x$, which confirms the claim in this case.

If $d = p-1$, then $s^{p-1} x =  p \cdot (p^{e-1} h(s)) x$ and thus  
\[ y = (p u(s) + s^{p-1}) x = p(u(s) + p^{e-1} h(s)) x;
\]
if the expression in the brackets is a unit, we may conclude like
before. Otherwise, $e=1$ and $h(s) = -1 + s h_1(s)$, and thus $y \in s
X$, so $\id{N}(y) = p y = 0$, in contradiction with the
assumption\footnote{At this point the assumption that $\ord(y) > p$
  plays a crucial role, and if it were not to hold, modules such that
  $\ord(x)$ becomes arbitrarily large are conceivable} that $\ord(y) >
p$.

The case $d = p$ is more involved. We claim that $\ord(x) < p^2
\ord(y)$. Assume that this is not the case. Since $d = p$, we have
$\prk(X) = p$ and thus $s^{p-1} x = y - p x u(s) \not \in p X$. In
particular $y \not \in p X$. Let $q = p^{k} = \ord(y)$ and assume that
$\ord(x) = p^{e+k} = p^e \cdot \ord(y), e > 1$. We note that
$\ord(s^{p-1} x) = p^{e-1} q$, since $s^{p-1} u^{-1}(s) x = - p x + y$
has annihilator $p^{e-1} q$. Consider the generators $s^j x$ of $X$; 
there is an integer $j$ in the interval $0 \leq j < p-1$, such that
\[ \ord(s^j x) = \ord(x) > \ord(s^{j+1} x) = \ord(s^{p-1} x) =
\ord(x)/p. \] Recall from Lemma \ref{h0fin}, that we are in the case when
$X[ p ]$ is a cyclic $\F_p[ s ]$ module of dimension $p$ as an $\F_p$-vector
space, and $\wh{H}^0(F, X) = 0$. Let
\begin{eqnarray*}
  \id{F}_0 & := & \{ q p^{e-1} s^i x \ : \ i = 0, 1, \ldots, j \}  \subset X[ p ],  \\
  \id{F}_1 & :=& \{ q p^{e-2} s^{j+i} x \ : \ i = 1, 2, \ldots, p-j-1 \}  \subset X[ p ] , 
\end{eqnarray*}
and $\id{F} = \id{F}_0 \cup \id{F}_1$. Then $\id{F}_i \subset X[ p ]$
are $\F_p$-bases of some cyclic $\F_p[ s ]$ submodules $F_0, F_1
\subset X[ p ]$ with $\dim_{\F_p}(F_0) \leq j+1$ and $\dim_{\F_p}(F_0)
\leq p-(j+1)$. We claim that $X[ p ] = F_0 \oplus F_1 = 0$.  For each
$z \in X[ p ]$ there is some maximal $z' \in X$ -- thus $z'$ having
non-trivial image $0 \neq \overline{z}' \in X/p X$ -- and such that $z
= q' z'$ for some $q' \in p^{\N}$. Since the generators of $X/p X$ are
mapped this way to $F = F_0 + F_1$, which is an $\F_p$-vector space,
it follows by linearity that $F = X[ p ]$.  Comparing dimensions, we
find that $F_0 \cap F_1 = 0$ so there is a direct sum $F = F_0 \oplus
F_1 = X[ p ]$, as claimed.

Note that
\[ 0 \neq (q/p) y = q x + (q/p) s^{p-1} u^{-1}(s) x \in X[ p ][ s ] ; \] 
upon multiplication with $p^{e-1} \geq p$ we obtain 
\begin{eqnarray}
\label{s1}
0  & = &  q p^{e-2} y = q p^{e-1} u(s) x + q p^{e-2} s^{p-1} x \nonumber \\
& = &  q p^{e-1} (u_0(s) + u_1(s) ) x + q p^{e-2} s^{p-1} x,  
\end{eqnarray}
where $u_0 \in F_0$ and $u_1 \in F_1$ are the projections of the unit
$u(s)$, thus
\[ u_0(s) \equiv \frac{\sum_{i=0}^{j} s^i \binom{p}{i}}{p} \bmod \ p,
\quad u_1(s) \equiv \frac{\sum_{i=j+1}^{p-2} s^i \binom{p}{i}}{p}
\bmod \ p . \] By definition of $j$, it follows that
  \[ f_0 := q p^{e-1} u_0(s) x \in F_0, \quad \ f_1 := s^{p-1} q
  p^{e-2} x \in F_1, \] and $q p^{e-1} u_1(s) x = 0$. The identity in
  \rf{s1} becomes $f_0 + f_1 = 0$, and since $F_0 \cap F_1 = 0$ and
  $f_i \in F_i, \ i = 0, 1$, it follows that $f_0 = f_1 = 0$. However,
  $f_1 = s^{p-1} q p^{e-2} x \in \id{F}_1$ is a basis element which generates
  $F_1[ s ]$, so it cannot vanish. The contradiction
  obtained implies that we must have in this case also $\ord(x) \leq p q$.
 Consequently, $\ord(x) \leq p \cdot \ord(y)$ in all cases, which
  completes the proof of the Proposition.
\end{proof}

\subsection{The Norm Principle, ray class fields and proof of Lemma \ref{hord}}
Let $q$ be a rational prime with $q \equiv 1 \bmod p^m$. Let $\rg{F}
\subset \Q_q[ \zeta_q ]$ be the subfield of the (ramified) \nth{q}
cyclotomic extension, which has degree $p$ over $\Q_q$. Thus $\rg{F}$
is the completion at the unique ramified prime above $q$ of the field
$\F$ defined in the text. The field $\rg{F}$ is a tamely ramified
extension of $\Q_q$, so class field theory implies that
$\Gal(\rg{F}/\Q_q)$ is isomorphic to a quotient of order $p$ of $\ZMs{q}$, 
so letting $S = (\ZMs{q})^p$, we have $\Gal(\rg{F}/\Q_q) \cong \ZMs{q}/S$. 
Let thus $r \in \Z$ be such that 
$r^{(q-1)/p} \not \equiv 1 \bmod q$: if $g \in \F_q^{\times}$
generates the multiplicative group of the finite field with $q$
elements and $m = v_p(q-1)$, then one can set $r = g^{(q-1)/p^m} \ \ \rem
\ \ q$. Let in addition $\rg{K}_n/\Q_q$ be the \nth{p^n} cyclotomic
extension of $\Q_q$: under the given premises, we have for $n > m$ the
extension degree $[ \rg{K}_n : \Q_q ] = p^{n-m}$ and $\rg{K}_n =
\Q_q\left[ r^{1/p^{n-m}} \right]$, while for $n \leq m$ the extension
is trivial.  Letting $r_n = r^{1/p^{n-m}} \in \rg{K}_n$, and $\rg{L}_n
= \rg{K}_n \cdot \rg{F}$, we deduce by class field theory that $r_n$
generates $\rg{K}_n^{\times}/\id{N}(\rg{L}_n^{\times})$, under the
natural projection. Indeed, the extension $\rg{L}_n/\rg{K}_n$ is a
ramified $p$ extension, so the galois group must be a quotient of the
roots of unity $W(\Z_q)$, hence the claim.  We shall use these
elementary observation in order to derive the structure of
$\wh{H}^1(F, A^-(\KL_n))$ in our usual setting and prove some
necessary conditions for elements $v_n \in A^-(\KL_n)$ which verify $0
\neq \beta(v_n) \in \wh{H}^1(F, A^-(\KL_n))$, under the natural
projection $\beta : A^-(\KL_n) \ra \wh{H}^1(F, A^-(\KL_n))$.

Let $\K, \KL, \F$, etc. be the fields defined in the main part of
the paper and let us denote by $I(\M)$ the ideals of some arbitrary
number field, and $P(\M) \subset I(\M)$ the principal ideals. The
maximal $p$-abelian unramified extension is $\KH(\M)$ and the $p$-part
of the ray class field to the ray $\eu{M}_q = q \cdot \id{O}(\M)$ will
be denoted by $\T_q(\M)$. If $\M$ is a CM field, then complex
conjugation acts, inducing $I^-(\M), P^-(\M)$ and $\KH^-, \T_q^-$, in
the natural way. In our context, we let in addition $P_N^-(\KL) :=
\id{N}(P^-(\KL)) \subset P^-(\K)$.  We let $\Delta_n =
\Gal(\K_n/\rg{k})$ for arbitrary $n$.  The following is an elementary
result in the proof of the Hasse Norm Principle:
\begin{lemma}
\label{princid}
Let $\K, \KL$ and $F$ be like above. Then
\begin{eqnarray}
\label{h1gen}
\wh{H}^{(1)}( F, A^-(\KL_n) ) \cong P^-(\K_n)/P_N^-(\KL_n) \cong \F_p[ \Delta_m ], \quad \hbox{for all $n > 0$},
\end{eqnarray}
the isomorphism being one of cyclic $\F_p[ \Delta_m ]$-modules.
\end{lemma}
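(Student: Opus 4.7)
The plan is to derive the first isomorphism from a Tate cohomology long exact sequence attached to the standard ideal/principal/class group short exact sequence, and then identify the quotient $P^-(\K_n)/P_N^-(\KL_n)$ with $\F_p[\Delta_m]$ via the Hasse Norm Principle combined with local class field theory at the primes above $q$.

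First I would take the $F$-equivariant short exact sequence
$$1 \to P^-(\KL_n) \to I^-(\KL_n) \to A^-(\KL_n) \to 1$$
and compute its Tate cohomology. A prime-by-prime analysis of $I^-(\KL_n)$ shows $\wh{H}^1(F, I^-(\KL_n)) = 0$: $F$-orbits of split primes yield induced modules (cohomologically trivial by Shapiro), while inert primes and the ramified primes above $q$ each contribute a copy of $\Z$ with trivial $F$-action, where $\wh{H}^1(F, \Z) = 0$. Using the cyclic periodicity $\wh{H}^2 \cong \wh{H}^0$, the connecting homomorphism becomes an injection $\wh{H}^1(F, A^-(\KL_n)) \hookrightarrow \wh{H}^0(F, P^-(\KL_n))$ that sends the class of $c \in \ker \id{N}$ to the class of $(\alpha)$, where $\id{N}(\eu{a}) = (\alpha)$ for any ideal $\eu{a} \in c$. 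The image lies naturally in the subgroup $P^-(\K_n)/P_N^-(\KL_n) \subseteq \wh{H}^0(F, P^-(\KL_n))$. For the surjectivity onto this subgroup I would either build a pre-image ideal $\eu{a}$ directly prime-by-prime --- using $\jmath$-equivariance to deal with the inert-prime obstructions --- or match orders, since the Herbrand quotient vanishes on finite modules, $|\wh{H}^1(F, A^-(\KL_n))| = |\wh{H}^0(F, A^-(\KL_n))|$, and Chevalley's genus formula in the minus part gives the correct count.

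For the second isomorphism I would invoke the Hasse Norm Principle for the cyclic extension $\KL_n/\K_n$: an element $\alpha \in \K_n^*$ lies in $N(\KL_n^*)$ iff it is a local norm at every place. The only places where the local norm quotient can contribute non-trivially to $P^-(\K_n)/P_N^-(\KL_n)$ (after modding out by units) are the primes above $q$. At each such place $v$, the local extension is isomorphic to the tame totally ramified degree-$p$ extension $\rg{F}/\Q_q$, so $\K_{n,v}^*/N(\KL_{n,v}^*) \cong \Z/p$ with a generator that can be taken to be a unit --- this uses $q \equiv 1 \bmod p^m$ and the residue field containing $\mu_p$, as set up in the preceding paragraph of the text. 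Since $q$ splits completely in $\K_m/\Q$, there are $|\Delta_m|$ primes of $\K_n^-$ above $q$ (the primes above one of $\eu{q}_0, \bar{\eu{q}}_0$, paired by $\jmath$), and $\Delta_m$ permutes them regularly. Their combined contribution in the minus part therefore assembles into the free cyclic $\F_p[\Delta_m]$-module of rank one.

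The main obstacle will be the careful bookkeeping of the minus-part decomposition and the $\Delta_m$-equivariant structure, together with verifying that inert primes at places away from $q$ do not leak into the quotient $P^-(\K_n)/P_N^-(\KL_n)$ once we mod out by units. I expect to handle this by invoking the global reciprocity constraint that local norm residues sum to zero, which forces any putative obstruction at an inert place to be absorbed into $P_N^-(\KL_n)$; alternatively, a direct count via the trivial Herbrand quotient and Chevalley's formula confirms that the $\Z/p$'s attached to the inert primes disappear in the quotient.
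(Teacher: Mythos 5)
Your approach via the Tate cohomology long exact sequence of $1 \to P^-(\KL_n) \to I^-(\KL_n) \to A^-(\KL_n) \to 1$ is a genuine alternative to the paper's argument, which instead builds an explicit map $\lambda:\wh{H}^1(F,A^-(\KL_n))\to P^-(\K_n)/P_N^-(\KL_n)$ (sending $\beta(a)\mapsto[(\alpha/\bar\alpha)]$ with $(\alpha)=\id{N}(\eu{A})$, $\eu{A}\in a$) and checks injectivity and surjectivity by hand. Your prime-by-prime computation of $\wh{H}^1(F,I^-)=0$ is correct and yields the injection cleanly, which is a pleasant gain over the paper's more computational injectivity check.

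However, two steps in your plan are underdeveloped in a way that matters. First, surjectivity: the paper proves it constructively by producing, via Tchebotarev, a principal prime $(\rho)\subset\K_n$ with $\rho\equiv x/\bar x \bmod q$ that splits in $\KL_n/\K_n$, lifting it to a prime $\eu{R}$ of $\KL_n$, and showing $\beta([\eu{R}^{1-\jmath}])\neq0$. Your suggested order-matching via the trivial Herbrand quotient and Chevalley's formula would at best give $|\wh{H}^1(F,A^-)|$; it does not by itself show that the image of the connecting map inside $\wh{H}^0(F,P^-(\KL_n))$ coincides with the specific subgroup $P^-(\K_n)/P_N^-(\KL_n)$ rather than some other subgroup of the same order. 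Second, and more importantly, the identification $P^-(\K_n)/P_N^-(\KL_n)\cong\F_p[\Delta_m]$: you correctly flag the possible "leakage" from inert primes away from $q$ and from units as the central difficulty, but your proposed resolutions (global reciprocity, Chevalley count) are not actually carried out and do not clearly close the gap. The paper's resolution is specific: it first identifies $(\K_n^\times)^-/\id{N}((\KL_n^\times)^-)\cong\F_p[\Delta_m]$ using the transitive $\Delta_m$-action on pairs of conjugate primes above $q$, and then argues that the further quotient by the image of $\mu(\K_n)$ does not shrink the module, because $\Norm_{\K_n/\rg{k}}(P^-(\K_n)/P_N^-(\KL_n))\neq0$ — and this uses the hypothesis $\mu_p\not\subset\rg{k}$, i.e.\ a structural feature of the base field $\K$ that was arranged in its construction. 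That last $\rg{k}$-norm argument is the genuinely nontrivial content of the lemma, and it is missing from your proposal; without it, or an equivalent mechanism that rules out unit contributions, the claimed isomorphism with $\F_p[\Delta_m]$ is not established.
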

\begin{proof}
  Note that both modules in \rf{h1gen} are annihilated by $p$. In the
  case of $P^-(\K)/P_N^-(\KL)$, this is a direct consequence of
  $(P^-(\K))^p = \Norm_{\KL/\K}(P^-(\K)) \subset P_N^-(\KL)$. Let
  $\beta : A^-(\KL) \ra \wh{H}^{(1)}( F, A^-(\KL) )$ and $\pi_N :
  P^-(\K) \ra P^-(\K)/P_N^-(\KL)$ denote the natural projections and
  let $a \in \Ker( \id{N} : A^-(\KL) \ra A^-(\KL) )$. Then $p u(s) a =
  - s^{p-1} a$ and thus $p a = - s^{p-1} u^{-1}(s) a$ and a fortiori
  $\beta(p a) = 0$ for all $a$, so $p \wh{H}^{(1)}( F, A^-(\KL) ) =
  0$, thus confirming that $\wh{H}^{(1)}( F, A^-(\KL_n) )$ is an $\F_p$-module too.

  Let now $\eu{A} \in a$ be some ideal and $(\alpha) =
  \id{N}(\eu{A})$. The principal ideal $\eu{a} :=
  (\alpha/\overline{\alpha}) \in P^-(\K)$ has image $\pi_N(\eu{a}) \in
  P^-(\K)/P_N^-(\KL)$ which depends on $a$ but not on the choice of
  $\eu{A} \in a$. This is easily seen by choosing a different ideal
  $\eu{B} = (x) \eu{A} \in a$: then $\id{N}(\eu{B}^{1-\jmath}) =
  \eu{a} \cdot \id{N}(x/\overline{x}) \in \eu{a} \cdot P_N^-(\KL)$,
  and $\pi_N(\id{N}(\eu{B}^{1-\jmath})) =
  \pi_N(\id{N}(\eu{A}^{1-\jmath})) = \pi_N(\eu{a})$ depends only on
  $a$.  Suppose now that $\eu{a} \in P^-_N(\KL)$, so $\pi_N(\eu{a}) =
  1$. Then there is some $y \in \KL^{\times}$ such that
  $\id{N}(\eu{A}^{1-\jmath}) = (\id{N}(y)^{1-\jmath})$ and thus
  $\id{N}(\eu{A}/(y))^{1-\jmath} = (1)$. Since $\wh{H}^{(1)}$ vanishes
  for ideals, it follows that there is a further ideal $\eu{X} \subset
  \KL$ such that
  \[ \eu{A}^{1-\jmath} = \left((y) \eu{X}^s \right)^{1-\jmath} , \]
  and thus $a^2 \in (A^-(\KL))^s$. But then $\beta(a) = 0$. We have
  shown that there is a map $\lambda : \wh{H}^{(1)}( F, A^-(\KL) ) \ra
  P^-(\K)/P_N^-(\KL)$ defined by the sequence of associations
  $\beta(a) \mapsto \eu{a} \mapsto \pi_N(\eu{a})$, which is a well
  defined injective homomorphism of $\F_p$-modules.

  In order to show that $\lambda$ is an isomorphism, let $\eu{x} :=
  (x/\overline{x}) \in P^-(\K) \setminus P_N^-(\KL)$ be a principal
  ideal that is not a norm from $\KL$. Let the Artin symbol of $x$ be
  $\sigma = \lchooses{\KL/\K}{x} \in \Gal(\KL/\K)$; by definition,
  $\KL$ is also $CM$, so complex conjugation commutes with $\sigma$
  and we have
\[ \lchooses{\KL/\K}{\overline{x}} = \lchooses{\KL/\K}{x^{\jmath} } =
\sigma^{\jmath} = \sigma . \] Consequently,
$\lchooses{\KL/\K}{(x/\overline{x})} = 1$ -- we may thus choose, by
Tchebotarew, a principal prime $(\rho) \subset \K$ with $\rho \cong
x/\overline{x} \bmod q$, and which is split in $\KL/\K$. Let $\eu{R}
\subset \KL$ be a prime above $(\rho)$ and $r := [ \eu{R}^{1-\jmath} ]
\in (A^-)(\KL)$.  We claim that $\beta(r) \neq 0$; assume not, so
$\eu{R} = (y) \eu{Y}^s$ for some $y \in \KL$ and $\eu{Y} \subset \KL$
and thus $\id{N}(\eu{R}^{1-\jmath}) = (\id{N}(y/\overline{y})) \cong
\eu{x} \bmod P^-_N(\KL)$. Since $(\id{N}(y/\overline{y})) \in
P^-_N(\KL)$ by definition, it follows that $\eu{x} = (x/\overline{x})
\in P_N^-(\KL)$, which contradicts the choice of $x$. It follows that
$\lambda$ is an isomorphism of $\F_p[ \Delta_m ]$-modules.  The proof
will be completed if we show that $P^-(\K)/P_N^-(\KL) \cong \F_p[ \Delta_m ]$.
Since $\Delta_m$ acts transitively on the pairs of complex conjugate primes above $q$
in $\K_n$, one verifies that $(\K_n^{\times})^-/\id{N}((\KL_n^{\times})^-) \cong \F_p[ \Delta_m ]$.
The field $\rg{k}$ contains no \nth{p} roots of unity, and thus
$\Norm_{\K_n/\rg{k}}( P^-(\K_n)/P_N^-(\KL_n) ) \neq 0$. The isomorphism
\[ P^-(\K)/P_N^-(\KL) \cong \F_p[ \Delta_m ] \]
follows, and this completes the proof.
\end{proof}

Consider the field $\T'_n := \T^-_q(\K_n)$ defined as the minus
$p$-part of the ray class field to the modulus $Q_n := q \id{O}(\K_n)$
-- i.e., if $\rg{T}$ is the full ray class field to $Q_n$ and
$\rg{T}_p$ is the $p$-part of this field, thus fixed by all $q'$-Sylow
subgroups of $\Gal(\rg{T}/\K_n)$, for all primes $q' \leq q$, then
$\T'_{n}$ is the subfield of $\rg{T}_p$ fixed by
$(\Gal(\rg{T}_p/\K_n))^+$. As mentioned above, the completions at
individual primes $\eu{q}'$ above $q$ are cyclic groups
\[ \T'_{n,\eu{q}'}/\KH(\K_n)_{\eu{q}'} \cong (\F_q[ \zeta_{p^n} ]^{\times})_p \cong C_{p^n} .\] 
Consequently, 
\begin{eqnarray}
\label{tn}
\Gal(\T'_n/\KH(\K_n)) \cong \prod_{g \in \Delta_m} C_{p^n} .
\end{eqnarray}
Since $q$ is ramified in $\KL_n/\K_n$, the residual fields of the ray
class subfield $\T_n := \T^-_q(\KL_n)$ are the same as the ones of
$\T'_n$ and thus $\T_n = \KH(\KL_n) \cdot \T'_n$.  Let also $\E'_n
\subset \T'_n$ be the maximal subextension with $p \Gal(\E'_n/\KH'_n)
= 0$ -- thus the $p$-elementary extension of $\KH(\K_n)$ contained in
$\T'_n$ -- and $\E_n = \E'_n \cdot \KH(\KL_n)$. Since the local
extensions $\T'_{n,\eu{q}'}/\KH(\K_n)_{\eu{q}'}$ are cyclotomic, the
ramification of $\E'_n/\KH(\K_n)$ is absorbed by $\KL_n/\K_n$ and
therefore $\E_n \subset \KH(\KL_n)$.

Consider a class $x \in A^-(\KL_n)$ such that $0 \neq \beta(x) \in
\wh{H}^1(F,A^-(\KL_n))$ and let $\eu{A} \in x$ be a prime and
$(\alpha) = \id{N}(\eu{A}) \subset \K_n$ be the principal ideal below
it. By Lemma \ref{princid}, it follows that
$\pi_N((\alpha/\overline{\alpha})) \neq 0$ and thus the Artin symbol
$y' = \lchooses{\T'_n/\K_n}{(\alpha/\overline{\alpha})} \in
\Gal(\T'_n/\K_n)$ generates a cycle of maximal length in
$\Gal(\T'_n/\KH(\K_n))$, so it acts non trivially in
$\E'_n/\KH(\K_n)$. If $y \in \Gal(\T_n/\KL_n)$ is any lift of
$\varphi(x)$, i.e. $y \vert_{\KH(\KL_n)} = \varphi(x)$, then
$\id{N}(y')$ acts non trivially in $\E'_n/\KH(\K_n)$.  The converse
holds too. Suppose that $x \in A^-(\KL_n)$ and let $y \in
\Gal(\T_n/\KL_n)$ be some lift of $\varphi(x)$.  If $\id{N}(y)$ fixes
$\KH(\K_n)$ and acts non trivially in $\E'_n/\KH(\K_n)$, then
$\beta(x) \neq 0$. Indeed, by choosing $\eu{A} \in x$ a prime with
$\lchooses{\T_n/\K_n}{\eu{A}} = y$, we see that $I := \id{N}(\eu{A})$
must be a principal ideal, since $\lchooses{\T_n)/\K_n}{I}$ fixes
$\KH(\K_n)$ and moreover, the Artin symbol acts non trivially on
$\E'_n/\KH(\K_n)$, so $\pi_N(I) \neq 0$. The claim follows from Lemma
\ref{princid}. 

Let $\T = \cup_n \T_n$.  In the projective limit, we
conclude that $x = (x_n)_{n \in \N}$ has $\beta(x) \neq 0$ iff for any
lift $y \in \Gal(\T/\KL_{\infty})$ of $\varphi(x) \in
\Gal(\KH(\KL_{\infty}))$ the norm $\id{N}(y)$ fixes $\KH(\K_{\infty})$
and acts non trivially in $\E'/\KH(K_{\infty})$, with $\E = \cup_n
\E'_n$. Moreover, $\T/\KH(\KL_{\infty})$ is the product of $| \Delta_m
|$ independent $\Z_p$-extensions and $\id{N}(y)$ is of $\lambda$-type.
We have thus from \ref{h1gen} a further isomorphism,
\begin{eqnarray}
 \label{h1e}
 \wh{H}^1(F, A^-(\KL_n)) \cong \Gal(\E'_n/\KH(\K_n)) \cong \Gal(\E'_{\infty}/\KH(\K_{\infty}).
\end{eqnarray}

\begin{figure}
\centering
\begin{tikzpicture}[node distance=2cm, auto] 
	\node(Q){$\mathbb{Q}$};
	\node(K)[above of=Q, right of=Q, node distance=2cm] {$\K$};
	\node(L)[right of=K]{$\mathbb{L}$};
	\node(Kn)[above of=K]{$\K_n$};
	\node(Ln)[above of=L]{$\mathbb{L}_n$};
	\node(Hnp)[above of=Kn, left of=Kn]{$\mathbb{H}_n'$};
	\node(bul)[above of=Hnp]{$\bullet$};
	\node(Enp)[left of=Hnp]{$\mathbb{E}_n'$};
	\node(Hn)[above of=Enp]{$\mathbb{H}_n$};
	\node(Tnp)[left of=Enp]{$\mathbb{T}_n'$};
	\node(Tn)[above of=Tnp]{$\mathbb{T}_n$};
	\node(Kinf)[above of=Kn, above of=Kn, above of=Kn]{$\K_{\infty}$};
	\node(Linf)[above of=Ln, above of=Ln, above of=Ln]{$\mathbb{L}_{\infty}$};
	\draw[-] (Q) to node {} (K);
	\draw[-] (K) to node {} (L);
	\draw[-] (K) to node {} (Kn);
	\draw[-] (L) to node {} (Ln);
	\draw[-] (Kn) to node {} (Hnp);
	\draw[-] (Ln) to node {} (bul);
	\draw[red] (Hnp) to node {$p$} (Enp);
	\draw[-] (Enp) to node {} (Tnp);
	\draw[red, dashed] (Hnp) to node {} (bul);
	\draw[red, dashed] (Enp) to node {} (Hn);
	\draw[-] (Tnp) to node {} (Tn);
	\draw[-] (bul) to node {$p$} (Hn);
	\draw[-] (Hn) to node {} (Tn);
	\draw[-] (Kn) to node {} (Kinf);
	\draw[-] (Ln) to node {} (Linf);
\end{tikzpicture}
\end{figure}

%\[ \begin{tikzcd}
% & & &  \K_{\infty} \arrow[dash]{dddd} & \KL_{\infty}\arrow[dash]{dddd}\\
% \T_n \arrow[dash]{r} \arrow[dash]{dd} & \KH_n \arrow[dash, "p"]{r} \arrow[dashed, no head, red]{dd} & \bullet \arrow[dash]{dddrr} \arrow[dashed, no head, red]{dd} & & \\
%& & & & & &\\
% \T'_n \arrow[dash]{r} & \E'_n \arrow[dash, red, "p"]{r} & \KH'_n\arrow[dash]{dr} &  & & &\\
% & & & \K_n \arrow[dash]{r} \arrow[dash]{d}& \KL_n \arrow[dash]{d}\\
% & & & \K \arrow[dash]{r} \arrow[dash]{dl} & \KL\\
% & & \Q & &
%\end{tikzcd} \]

We have thus proved:
\begin{fact}
\label{h1str}
For every $n > m$, a class $x \in A^-(\KL_n)$ has non trivial image
$\beta(x) \in \wh{H}^1(F, A^-(\KL_n))$ iff for any lift $y \in
\Gal(\T_n/\KL_n)$ of $\varphi(x) \in \Gal(\KH(\KL_n)/\KL_n)$, the norm
$y' := \id{N}(y) \in \Gal(\T'_n/\K_n)$ fixes $\KH_n$ and acts non
trivially in $\E'_n$; equivalently, $y'$ generates a maximal cycle in
$\Gal(\T'_n/\KH(\K_n))$.  In the projective limit, $x = (x_n)_{n \in
  \N}$ has $\beta(x) \neq 0$ iff for any lift $y \in
\Gal(\T/\KL_{\infty})$ of $\varphi(x) \in \Gal(\KH(\KL_{\infty}))$ the
norm $\id{N}(y)$ fixes $\KH(\K_{\infty})$ and acts non trivially in
$\E'/\KH(K_{\infty})$, with $\E = \cup_n \E'_n$.  Moreover, there is
an exact sequence
\begin{eqnarray}
\label{eex}
 1 \ra \Gal(\KH(\KL_{\infty}/\KL_{\infty}) \ra \Gal(\T/\KL_{\infty}) \ra (\Z_p)^{|\Delta_m|} \ra 1, 
\end{eqnarray}
and thus $\Gal(\T/\K_{\infty})$ is Noetherian $\Lambda$-module.
\end{fact}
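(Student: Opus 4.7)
The plan is to exploit Lemma \ref{princid} to identify $\wh{H}^1(F, A^-(\KL_n))$ with $P^-(\K_n)/P_N^-(\KL_n)$, and then transport this principal-ideal description to the Galois side via the Artin reciprocity maps into the ray class fields $\T'_n$ and $\T_n$.

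First, for a class $x \in A^-(\KL_n)$, I would pick a prime $\eu{A} \in x$ and set $(\alpha) = \id{N}(\eu{A})$. By Lemma \ref{princid}, $\beta(x) \neq 0$ is equivalent to $(\alpha/\overline{\alpha}) \notin P_N^-(\KL_n)$. Pushing this to the Galois side, it amounts to saying that the Artin symbol $\lchooses{\T'_n/\K_n}{(\alpha/\overline{\alpha})}$ acts nontrivially on $\E'_n$, the maximal $p$-elementary subextension of $\T'_n$ over $\KH(\K_n)$. The key point is that this Artin symbol coincides with $\id{N}(y)\big\vert_{\T'_n}$ where $y \in \Gal(\T_n/\KL_n)$ is any lift of $\varphi(x)$; this follows from functoriality of the Artin map under $\Norm_{\KL_n/\K_n}$, together with the containment $\T'_n \subset \T_n$ coming from $\T_n = \KH(\KL_n) \cdot \T'_n$.

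Next, I would use the local cyclotomic description to identify ``acting nontrivially on $\E'_n$'' with ``generating a maximal cycle in $\Gal(\T'_n/\KH(\K_n))$''. By the remarks leading to (\ref{tn}), at each prime $\eu{q}'$ of $\K_n$ above $q$ the completion $\T'_{n,\eu{q}'}/\KH(\K_n)_{\eu{q}'}$ is cyclic of order $p^n$ (tame cyclotomic class field theory), so $\Gal(\T'_n/\KH(\K_n)) \cong \prod_{g \in \Delta_m} C_{p^n}$; an element acts nontrivially on the $p$-elementary subextension $\E'_n$ iff it generates one of the cyclic factors, which is precisely the maximal cycle condition.

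For the projective limit statement and the exact sequence (\ref{eex}), I would pass to the inverse limit over $n$ in the decomposition $\Gal(\T'_n/\KH(\K_n)) \cong \prod_{g \in \Delta_m} C_{p^n}$, obtaining $\Gal(\T/\KH(\KL_{\infty})) \cong \Z_p^{|\Delta_m|}$; combined with the short exact sequence of Galois groups attached to the tower $\KL_{\infty} \subset \KH(\KL_{\infty}) \subset \T$, this yields (\ref{eex}). Noetherian-ness then follows because $\Gal(\KH(\KL_{\infty})/\KL_{\infty})$ is a Noetherian $\Lambda$-module by standard Iwasawa theory and the quotient $\Z_p^{|\Delta_m|}$ is finitely generated. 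The main technical obstacle I anticipate is verifying compatibility of the Artin map with the norm in the square relating $\Gal(\T_n/\KL_n)$ and $\Gal(\T'_n/\K_n)$, in particular showing that $\id{N}(y)\big\vert_{\T'_n}$ is well defined and agrees with the Artin symbol of the principal ideal $\id{N}(\eu{A})^{1-\jmath}$.
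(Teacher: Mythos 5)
Your proposal follows essentially the same route as the paper's own argument: the identification of $\wh{H}^1$ with the principal-ideal quotient via Lemma \ref{princid}, the transfer to the Galois side through the norm-compatibility of the Artin map for the ray class towers $\T'_n \subset \T_n = \KH(\KL_n)\cdot\T'_n$, the local cyclotomic description $\Gal(\T'_n/\KH(\K_n)) \cong \prod_{g\in\Delta_m}C_{p^n}$ to recast non-triviality on $\E'_n$ as the maximal-cycle condition, and the inverse limit giving the $\Z_p^{|\Delta_m|}$ quotient and Noetherian-ness. You correctly flag the Artin--norm compatibility as the one point requiring care, which is indeed the crux the paper handles implicitly.
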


We now prove the Lemma \ref{hord}:
\begin{proof}
  Assume that $\wh{H}^1\left( F, \left( \id{M}(\apr{\KL}) \right)
  \right) \neq 0$. Consider the modules $M_n = \Gal(\T_n/\KL_n)$ and
  $M = \varprojlim_n M_n = \Gal(\T/\KL_{\infty})$. It follows from
  Fact \ref{h1str} that $M/\Gal(\KH(\KL_{\infty})) \cong
  \Z_p^{|\Delta_m|}$, so $M$ is a Noetherian $\Lambda$-module and the
  exact sequence \rf{eex} shows that $M$ is a rigid module and the
  further premises of Proposition \ref{tpdeco} hold too, as a
  consequnece of the choice of $\K$. Let $v' \in M$ be such that the
  restriction $v = v' \big \vert_{\KH(\KL_{\infty})} \in
  \id{M}(\apr{\KL})$ and it has non trivial image in $\wh{H}^1\left(
    F, \left( \id{M}(\apr{\KL}) \right) \right)$, via the inverse Artin map.
  In particular $v \not \in \id{L}(\apr{\KL})$ and also $v' \not \in \id{L}(M)$. Assume
  that $\ord(v) \leq p \exp(\id{M}(\apr{\K}))$.  The Proposition
  \ref{tpdeco} together with Remark \ref{kp1} imply that $T^2 v' =
  v'_{\mu} + v'_{\lambda}$ is decomposed.  But then $v'_{\mu} \in
  \Gal(\KH(\KL_{\infty}))$ so it follows from the above Fact that
  $\beta(T^2 v) = 0$. Thus, if $v \in \id{M}(\apr{\KL})$ has order 
  $\ord(v) \leq p \cdot \exp(\id{M}(\apr{\K})$, 
  then $T^2 v \in s \id{M}(\apr{\KL})$, which completes the proof of
  Lemma \ref{hord}.
\end{proof}
\subsection{Proof of the Proposition \ref{tpdeco}}
The proof of the Proposition requires a longer analysis of the growth
of modules $\Lambda x_n$ for indecomposed elements. This will be
divided in a sequence of definitions and Lemmata which eventually lead
to the proof.

The arguments of this section will take repeatedly advantage of the
following elementary Lemma\footnote{I owe to Cornelius Greither
  several elegant ideas which helped simplify my original proof.}:
\begin{lemma}
\label{ab}
Let $A$ and $B$ be finitely generated abelian $p-$groups denoted
additively, and let $N: B\ra A$, $\iota:A\ra B$ be two $\Z_p$ - linear
maps such that:
\begin{itemize}
\item[1.] $N$ is surjective.
\item[2.] The $p-$ranks $\prk(A) = \prk(p A) = \prk(B) = r$.
\item[3.]  $N(\iota(a))=p a,\forall a\in A$.
\end{itemize}
Then
\begin{itemize}
\item[ A. ] The inclusion $\iota(A) \subset p B$ holds
  unconditionally.
\item[ B. ] We have $\iota(A) = p B$ and $B[ p ] = \Ker(N) \subset
  \iota(A)$. Moreover, $\ord(x) = p \cdot \ord( \iota(N (x) ) )$ for all $x \in
  B$.
\item[ C. ] If there is a group homomorphism $T : B
  \ra B$ with $\iota(A) \subseteq \Ker(T)$ and $\nu := \iota \circ N =
  p + \binom{p}{2} T + O(T^2)$, then $\nu = \cdot p$, i.e. $\iota
  (N(x)) = p x$ for all $x \in B$.
\end{itemize}
\end{lemma}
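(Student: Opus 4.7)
The plan is to handle Parts A, B, C in sequence, with B carrying the bulk of the structural work. For Part A, I would reduce modulo $p$: the surjection $N$ induces $\bar N : B/pB \to A/pA$, a surjection between $\F_p$-vector spaces of common dimension $r$ (by hypothesis~2), hence an isomorphism. The composition $\bar N \bar \iota$ is the mod-$p$ reduction of $N\iota = p \cdot \mathrm{id}_A$, which vanishes, and since $\bar N$ is injective this forces $\bar \iota = 0$, i.e., $\iota(A) \subseteq pB$.

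For Part B, I would first extract structural consequences. Since $N(\iota(A)) = pA$ and $\iota(A) \subseteq pB$, the restriction $N|_{pB}: pB \twoheadrightarrow pA$ is surjective, forcing $\prk(pB) \geq \prk(pA) = r$; combined with $\prk(pB) \leq \prk(B) = r$, this yields $\prk(pB) = r$, so every cyclic summand of $B$ has order $\geq p^2$, giving $B[p] \subseteq pB$. Counting $|\Ker(N|_{pB})| = |pB|/|pA| = |B|/|A| = |\Ker N|$ shows $\Ker N \subseteq pB$; the isomorphism $pB/\Ker N \cong pA$ identifies the image of $\iota(A)$ with all of $pA$, so $\iota(A) + \Ker N = pB$. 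The remaining step $\Ker N \subseteq \iota(A)$ requires a structural argument: writing $A \cong \bigoplus \Z/p^{a_i}$ and $B \cong \bigoplus \Z/p^{b_i}$ with sorted exponents ($a_i, b_i \geq 2$), the existence of $\iota$ with $N\iota = p \cdot \mathrm{id}_A$ forces $b_i \leq a_i + 1$, and an explicit computation on cyclic summands yields $\iota(A[p]) = \Ker N$. Combining, $\iota(A) = pB$, so $B[p]$ and $\Ker N$ both lie in $\iota(A)$. For the order identity: $\iota N(x) - px \in \Ker N \subseteq B[p]$, so $\iota N(x) = px + \epsilon$ with $p\epsilon = 0$; since all cyclic factors of $B$ have order $\geq p^2$, such a perturbation preserves the order, giving $\ord(\iota N(x)) = \ord(px) = \ord(x)/p$ for $x \neq 0$.

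For Part C, Part B gives $\iota(A) = pB$, so $T(pB) = T(\iota(A)) = 0$, i.e., $pT(b) = 0$ for all $b$, whence $T(B) \subseteq B[p]$. Since $B[p] \subseteq pB = \iota(A) \subseteq \Ker T$, we get $T^2(B) = 0$, killing all $O(T^2)$ terms. The surviving $\binom{p}{2}T(b)$ equals $\tfrac{p-1}{2}\cdot pT(b) = 0$ because $p \mid \binom{p}{2}$ for odd $p$ and $pT(b) = 0$. Hence $\nu(b) = pb$. The main obstacle lies in Part B: showing $\Ker N \subseteq \iota(A)$ requires the structural fact $b_i \leq a_i + 1$ and an explicit matching of $\iota(A[p])$ with $\Ker N$ on each cyclic summand of the Smith normal form decomposition.
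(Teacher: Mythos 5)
Parts A and C of your argument coincide with the paper's proof and are correct. The real content is in Part B, and there your proposal both diverges from the paper's route and has two genuine gaps.

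For $\iota(A)=pB$: you establish $\prk(pB)=r$, $\Ker N\subseteq pB$ by counting, and $\iota(A)+\Ker N=pB$ — all fine — but the decisive inclusion $\Ker N\subseteq\iota(A)$ is only sketched (``writing $A\cong\bigoplus\Z/p^{a_i}$ \dots and an explicit computation on cyclic summands''). That sketch is where the actual work lies, and Smith normal form plus ``explicit matching'' is not something that goes through automatically: one must pick compatible bases for $A$ and $B$ simultaneously diagonalizing $N$ and $\iota$, which is nontrivial and not done. The paper avoids this entirely: from $\prk(A)=\prk(\iota(A))=\prk(pA)$ it observes that the induced maps $\iota'\colon A/pA\to pB/p^2B$ and $\overline N\colon pB/p^2B\to pA/p^2A$ compose to the isomorphism $\cdot p\colon A/pA\to pA/p^2A$, forcing $\iota'$ to be an isomorphism and hence, by Nakayama, $\iota\colon A\to pB$ to be surjective. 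This is shorter, self-contained, and is the argument you should internalize rather than the normal-form route.

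For the order identity $\ord(x)=p\cdot\ord(\iota N(x))$, your argument does not hold as written. You write $\iota N(x)=px+\epsilon$ with $\epsilon\in\Ker N\subseteq B[p]$ and then assert that ``since all cyclic factors of $B$ have order $\geq p^2$, such a perturbation preserves the order.'' That is false as a general statement: if $\ord(px)=p$ (equivalently $\ord(x)=p^2$), then $px$ and $\epsilon$ both lie in $B[p]$ and the sum can vanish; nothing about the elementary divisors of $B$ prevents this. This is precisely the delicate case, and your heuristic collapses exactly there. The paper handles it differently: it compares the two surjections $\pi\colon x\mapsto px$ and $\pi'=\iota\circ N$ from $B$ onto $pB$ and produces an automorphism $\phi$ of $pB$ with $\pi=\phi\circ\pi'$, from which order-preservation is immediate. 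What makes $\phi$ exist (and what your argument lacks) is the identity $\iota N\iota(a)=\iota(pa)=p\iota(a)$, which shows that $\pi$ and $\pi'$ agree on $\iota(A)=pB\supseteq B[p]=\Ker\pi$; combined with $|\Ker\pi|=|\Ker\pi'|$ this forces $\Ker\pi=\Ker\pi'$, hence $\phi$. Your proposal never invokes this identity, so the equality of kernels — the crux — is missing.

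Two smaller remarks. First, you never actually establish the stated equality $B[p]=\Ker(N)$; you only show both are contained in $\iota(A)$. (As stated this equality need not hold — take $A=B=\Z/p^2$, $N=\mathrm{id}$, $\iota=\cdot p$, where $\Ker N=0$ but $B[p]\neq 0$ — so the inclusions $\Ker N\subseteq B[p]\subseteq\iota(A)=pB$ are what one can and should prove, and what the rest of the paper actually uses.) Second, the bound $b_i\leq a_i+1$ you invoke, while true, is a consequence of the order identity, not a shortcut to it, so it cannot carry the structural argument on its own.
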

\begin{proof}
  Since $A$ and $B$ have the same $p$-rank and $N$ is surjective, we
  know that the map $\overline{N} : B/p B \ra A/p A$ is an
  isomorphism\footnote{For finite abelian $p$-groups $X$ we denote
    $R(X) = X/pX$ by \textit{roof} of $X$ and $S(X) = X[ p ]$ is its
    \textit{socle}}. Therefore, the map induced by $N \iota$ on the
  roof is trivial. Hence $\overline{\iota} : A/p A \ra B/p B$ is also
  zero and thus $\iota(A) \subset p B$, which confirms the claim A.

  The premise $\prk(p A) = \prk(A)$ implies that $\prk(A)
  = \prk(\iota(A))$, as follows from
  \[ \prk(A) \geq \prk(\iota(A) = \prk(\iota(A)[ p ]) \geq \prk( p A )
  = \prk(A). \] We now consider the map $\iota' : A/p A \ra p B/ p^2
  B$ together with $\overline{N}$. From the hypotheses we know that $N
  \iota'$ is the multiplication by $p$ isomorphism: $\cdot p : A/ p A
  \ra p A/p^2 A$, as consequence of $\prk(A) = \prk(\iota(A)) = \prk(p
  A)$. It follows that $\iota'$ is an isomorphism of $\F_p$-vector
  spaces and hence $\iota : A \ra p B$ is surjective, so $\iota(A) = p
  B$. Consequently $| B | / |\iota(A) | = p^r$.  Let $x \in \Ker(N)$;
  since $N : B/p B \ra A/p A$ is surjective, the norm does not vanish
  for $x \not \in p B$.  Consequently, for $x \in \Ker(N) \subset p B$
  we have $N x = p x = 0$, so $\Ker(N) = B[ p ] \subset \iota(A)$, so
  $\Ker(N) = \iota(A)[ p ]$, as claimed.

  We now prove that $\ord(x) = p \cdot \ord(\iota(N (x) )$ for all $x
  \in B$. Consider the following maps $\pi : B \ra \iota(A), x \mapsto
  p x$ and $\pi' = \iota \circ N$.  Since $p B = \iota(A)$, both maps
  are surjective and there is an isomorphism $\phi : p B \ra p B$ such
  that $\pi = \phi \circ \pi'$.  Therefore
  \[ \ord(x)/p = \ord(p x) = \ord(\phi(p x)) = \ord(\iota(N(x))), \]
  and thus $\ord(x) = p \cdot \ord(\iota(N(x)))$, as claimed.

  For point C. we let $x \in B$, so $p x \in p B = \iota(A)$ and thus
  $p T x = T ( p x ) = 0$. Consequently $T x \in B[ p ] \subset
  \iota(A)$ and thus $T^2 x = 0$. From the definition of $\nu = \iota
  \circ N = p + Tp \frac{p-1}{2} + O(T^2)$ we conclude that $\nu x = p
  x + \frac{p-1}{2} T p x + O(T^2) x = p x$, which confirms the claim
  C. and completes the proof.
\end{proof}

In this section $\M_{\infty}/\M$ is an arbitrary $\Z_p$-extension in
which all the primes that ramify, ramify completely and $X' \subseteq
Y := \Gal(\T/\M_{\infty})$ is a Noetherian $\Lambda$-submodule,
the limit of the ray class groups $\Gal(\T_n/\M_n)$ to some ray module
associated to the base field with trivial finite part. Thus $\omega_n
x = 0$ implies $T x = 0$ by Fact \ref{sa}. The ray class groups $Y_n =
\varphi^{-1}(\Gal(\KH(\T_n)/\M_{\infty})$ may also coincide with class
groups $\A(\M_n)$.

We shall apply the Theorem \ref{tpdeco} to the concrete cases in which 
$\M = \KL$ and $Y$ is either $\Gal(\KH^-(\KL)/\KL_{\infty})$ or
$\Gal(\T/\M_{\infty})$, where $\T$ is the injective limit of 
subfields of ray class groups, defined above. 

We denote by $L, M, D$ the $\lambda$-, the $\mu$- and the
decomposed parts, respectively, of $X'$: thus, in the notation of the
introduction, we have $L = \id{L}(X'), M = \id{M}(X'), D =
\id{D}(X')$. One can for instance think of $\M$ as $\K$ in \S 2 and
$X' = \varphi(\apr{\K}) = X^-$ or $X' = \Gal(\T/\KH(\K_{\infty}))$.

For $x \in M$, the order is naturally defined by $\ord(x) = \min\{ p^k
\ : \ p^k x = 0 \}$.  Since $X'$ has no finite submodules, it follows
that $\ord(x) = \ord(T^j x)$ for all $j > 0$.

We introduce some distances $d_n : X' \times X' \ra \N$ as follows:
let $x, z \in X'$; then
\[ d_n(x, z) := \prk(\Lambda (x_n - z_n)); \quad d_n(x) = \prk(\Lambda
x_n) . \] We obviously have $d_n(x,z) \leq d_n(x,y)+d_n(x,z)$ and
$d_n(x) \geq 0$ with $d_n(x) = 0$ for the trivial module. Also, if $f
\in \Z_p[ T ]$ is some distinguished polynomial of degree $\phi =
\deg(f)$, then $d_n(x)-\phi \leq d_n(f x) \leq d_n(x)$ for all $x \in
X$. We shall write $d(x, y) = \lim_n d_n(x,y)$. Also, for explicit
elements $u, v \in X'_k$, we may write $d(u, v) = \prk(\Lambda
(u-v))$.  This can be useful for instance when no explicit lifts of
$u, v$ to $X'$ are known. The simplest fact about the distance is:
\begin{fact}
\label{dlam}
Let $x, z \in X'$ be such that $d_n(x, z) \leq N$ for some fixed bound
$N$ and all $n > 0$. Then $x-z \in L$ and $N \leq \ell := \prk(L)$.
For every fixed $d \geq \prk(L)$ there is an integer $n_0(d)$ such
that for any $x \in X' \setminus L$ and $n > n_0$, if $d_n(x) \leq d$
then $x \in \nu_{n,n_0} X'$.
\end{fact}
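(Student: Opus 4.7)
The plan is to handle the two assertions of Fact \ref{dlam} separately. The first is essentially an application of the Iwasawa growth formula to the cyclic module generated by $y := x-z$; the second requires a finer mod-$p$ analysis combined with a lifting argument to the full profinite limit.

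For the first assertion, I would set $y = x-z$, so that $d_n(y) = d_n(x,z) \leq N$ for all $n$. Viewing $\Lambda y \subseteq X'$ as a cyclic Noetherian $\Lambda$-torsion module with invariants $\mu_y, \lambda_y$, Iwasawa's classical order formula gives $\log_p |(\Lambda y)_n| \geq \mu_y \cdot p^{n-\kappa} + \lambda_y n + O(1)$ for $n$ large. Since $\Lambda y_n$ is a quotient of $(\Lambda y)_n = \Lambda y / \omega_n \Lambda y$ of $p$-rank at most $N$, and the $\mu$-summands of $\Lambda y$ have exponent bounded by $p^e := \exp(\id{M}(X'))$, one has $\log_p |\Lambda y_n| \leq eN$ uniformly in $n$. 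This forces $\mu_y = 0$, so $\Lambda y$ has finite $p$-rank and $y \in L$ by the maximality definition of $L$. Then $\Lambda y_n \subseteq L_n$, and the stabilization of the norm maps on the $\Z_p$-finitely generated $L$ yields $\prk(L_n) = \ell$ for $n$ large, hence $N \leq \ell$.

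For the second assertion, fix $d \geq \ell$ and consider $x \in X' \setminus L$ with $d_n(x) \leq d$. The approach is to reduce modulo $p$: set $\overline{X}_n := X'_n / p X'_n$, viewed as an $\F_p[T]/(\omega_n)$-module. Since $\omega_n \equiv T^{p^{n-\kappa}} \pmod p$, the $\mu$-part of $\overline{X}_n$ is pseudo-isomorphic to $\mu$ copies of $\F_p[T]/T^{p^{n-\kappa}}$, while the $\lambda$-part contributes at most $\ell$ further dimensions. The cyclic submodule $\F_p[T]\bar{x}_n$ has dimension at most $d$, forcing the projection of $\bar x_n$ onto each $\mu$-summand to have $T$-adic valuation at least $p^{n-\kappa}-d$. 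Choosing $n_0 := \kappa + \lceil \log_p d \rceil + 1$ ensures $p^{n_0-\kappa} > d$, so for every $n > n_0$ this valuation is at least $p^{n-\kappa}-p^{n_0-\kappa}$, which matches the reduction $\nu_{n,n_0} \equiv T^{p^{n-\kappa}-p^{n_0-\kappa}} \pmod p$. The bounded $\lambda$-contribution is absorbed by the assumption $d \geq \ell$, invoking Fact \ref{sa} to conclude that $\nu_{n,n_0}$ acts with finite cokernel on $L$. This gives $\bar x_n \in \nu_{n,n_0}\overline{X}_n$.

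The main obstacle is the lifting step: promoting this mod-$p$, level-$n$ divisibility to a divisibility $x \in \nu_{n,n_0}X'$ in the full profinite limit. The idea is to write $x_n = \nu_{n,n_0} y_n + p r_n$ and iterate the same reduction on $r_n$, whose rank is bounded in terms of $d_n(x)$. After at most $e$ rounds the $\mu$-part residual vanishes, leaving a residue concentrated in the $\lambda$-part where $\nu_{n,n_0}$-divisibility is automatic up to finite kernel by Fact \ref{sa}. The finite-level divisors $y_n$ must then be assembled into a norm-coherent sequence $y \in X' = \varprojlim X'_m$ using compactness; the absence of finite $\Lambda$-submodules in $X'$ (again Fact \ref{sa}) guarantees that no residual obstruction survives in the limit. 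The careful verification that the iteratively produced lifts can be chosen norm-coherently is the most delicate part of the argument.
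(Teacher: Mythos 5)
Your plan for the first assertion is in the right spirit, but the inequality you invoke points the wrong way: the cyclic module $\Lambda y_n$ inside $X'_n$ is a \emph{quotient} of $\Lambda y/\omega_n\Lambda y$, so $\log_p|\Lambda y_n|\leq \log_p|(\Lambda y)/\omega_n(\Lambda y)|$. The Iwasawa growth formula gives a lower bound on the right-hand side, and your $d_n$-hypothesis gives an upper bound on the left-hand side; these two estimates are compatible even when $\mu_y>0$, so the argument as written does not force $\mu_y=0$. One needs some control on the kernel $(\Lambda y\cap\omega_n X')/\omega_n\Lambda y$, which you do not discuss. The paper's own treatment of this half is terse (it simply asserts that bounded rank at each level excludes a $\mu$-component and an indecomposed component), but at least it does not commit to a comparison of sizes in the wrong direction.

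For the second assertion, you take a genuinely different route from the paper and you end with a step you yourself flag as unproven. Your plan is: show $\bar x_n\in\nu_{n,n_0}\overline{X}_n$ mod $p$ via a $T$-adic valuation count on the $\mu$-summands, then lift $p$-adically by iterating the same argument on a residue $r_n$, and finally assemble the finite-level divisors into a coherent element of $X'=\varprojlim X'_m$ by ``compactness.'' This last step is where the proposal stops being a proof: you must produce a single $y\in X'$ with $x=\nu_{n,n_0}y$ in the profinite limit, and the choices of $y_n$ in the iterative mod-$p$ argument are made level by level with no mechanism given for norm-coherence. Since $\nu_{n,n_0}X'$ is closed, a compactness argument can in principle work, but it requires showing that the set of candidates $\{y\in X'_m : x_m\equiv\nu_{n,n_0}y \bmod \cdots\}$ forms a non-empty inverse system, and your iteration neither produces such a system nor bounds the number of rounds independently of $m$. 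You acknowledge this (``the most delicate part''), but acknowledging a gap does not close it.

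The paper avoids the lifting problem altogether. After reducing to $x\in M=\id{M}(X')$ (via $F(T)x$, noting $d_n(F(T)x)\leq d_n(x)$), it observes that $d_n(x)\leq d$ together with $p^\mu x=0$ forces the annihilator of $x_n$ in $\Lambda$ to contain a distinguished polynomial $h$ of degree $\leq d$, and that modulo $p^\mu$ there are only finitely many such $h$. One then chooses $n_0$ so that $\nu_{n_0,1}\in (h,p^\mu)$ for every $h$ in this finite list — possible because $\nu_{n,1}=p^\mu V(T)+T^{\deg\nu_{n,1}}W(T)$ and the Euclidean remainders of high powers of $T$ by the $h$'s eventually fall into $p^\mu\Lambda$. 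This yields the \emph{integral} vanishing $\nu_{n_0,1}x_n=0$ at level $n$, which by the structure theory ($\ker(X'\to X'_n)=\nu_{n,1}X'$, via Iwasawa's Theorem VI) gives $\nu_{n_0,1}x=\nu_{n,1}w$ globally; division by $\nu_{n_0,1}$ then follows from Fact~\ref{sa}. The crucial ingredient you are missing is the use of the bounded exponent of $M$ to make the set of possible annihilators \emph{finite}, which is what turns a mod-$p$ observation into an integral one and removes the need for any assembly argument.

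Note also that your explicit bound $n_0=\kappa+\lceil\log_p d\rceil+1$ cannot be right as stated: $n_0$ must depend on the exponent $p^\mu$ of $\id{M}(X')$ as well as on $d$, precisely because the lifting of a mod-$p$ divisibility to a $p^\mu$-level one requires more room.
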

\begin{proof}
  The element $y = x-z$ generates at finite levels modules $\Lambda
  y_n$ of bounded rank, so it is neither of $\mu$-type nor
  indecomposed. Thus $y \in L$ and consequently $d_n(y) \leq \prk(L_n)
  \leq \ell$ for all $n$, which confirms the first claim.
  
  For the second claim, note that if $x \not \in L$, then $d_n(x) \ra
  \infty$, so the boundedness of $d_n(x)$ becomes a strong constraint
  for large $n$. Next we recall that $F(T) x \in M$ and since
  $d_n(F(T) x) \leq d_n(x)$, we may assume that $x \in M$. Now $d_n(x)
  \leq d$ implies the existence of some distinguished polynomial $h
  \in \Z_p[ T ]$ with $\deg(h) = d$ and such that $h(T) x_n = 0$. The
  exponent of $M$ is bounded by $p^{\mu}$, so there is a finite set
  $\id{H} \subset \Z_p[ T ]$ from which $h$ can take its values. Let
  now $n_0$ be chosen such that $\nu_{n_0,1} \in (h(T), p^{\mu})$ for
  all $h \in \id{H}$. Such a choice is always possible, since
  $\nu_{n,1} = p^{\mu} \cdot V(T) + T^{p^{n-\mu}-1} \cdot W(T)$ for
  some $V(T), W(T) \in \Lambda$. We may thus choose $n$ sufficiently
  large, such that the Euclidean division $T^{p^{n-\mu}} = q(T) h(T) +
  r(T)$ yields remainders $r(T)$ which are divisible by $p^{\mu}$ for
  all $h \in \id{H}$.  Let $n_0$ be the smallest such integer.  With
  this choice, for any $h \in \id{H}$, it follows that $h(T) x_n = 0$
  implies $\nu_{n_0,1} x_n = 0$ and thus $\nu_{n_0,1} x = \nu_{n,1} w$
  for some $w \in X'$. Then $\nu_{n_0,1}( x - \nu_{n,n_0} w) = 0$ and
  thus $x = \nu_{n,n_0} w$, by Fact \ref{sa} and the assumption on
  $X'$.
  
%   We now show that for
%   fixed $h \in \id{H}$ there is some even $m$ such that $\nu_{m,1} \in
%   (h, p^{ \mu})$. Let the euclidean division yield
%   \[ \nu_{m,1} = q_m(T) h(T) + r_m(T), \]
%   and let $\xi_i \in \overline{\Q}_p$ be the zeroes of $h$. Then 
%   \[ r_m(\xi_i) = \nu_{m,1}(\xi_i) = \frac{(1+\xi_i)^{p^m}-1}{\xi_i} =
%   O(p^{m/2}) + O(\xi_i^{p^{m/2}}). \] Since $\id{H}$ is finite, there
%   is some lower bound $\delta$ with $v_p(\xi_i) \geq \delta$ and the
%   above identity shows that $v_p(r_m) \ra \infty$ with diverging $m$,
%   so one can choose $m$ sufficiently large, such that $v_p(r_m) > \mu$
%   and thus $r_m \equiv 0 \bmod p^{\mu}$, so $\nu_{m,1} \in (h(T),
%   p^{\mu})$. This can be achieved for all $h \in \id{H}(T)$ and the
%   claim is satisfied by choosing $n_0 = \max_{h \in \id{H}} (m(h))$ .
%   

\end{proof}

We pass now to the proof of Proposition \ref{tpdeco}. 
\begin{proof}
  Let $x \in X'$ and suppose that $l$ is the smallest integer such
  that $p^l x \in L$ and let $f_x(T)$ be the minimal annihilator
  polynomial of $p^l x$, so $y := f_x(T) x \in M$, since $p^l y =
  0$. We claim that
  \[ p^j x_{n+j} - \iota_{n,n+j}(x_n) \in f_x(T) \Lambda x_{n+j}
  \subset \Lambda y_{n+j}, \quad \forall j > 0, \] and in particular
  $\iota_{n,n+l}(x_n) = p^l x_{n+l} - h_{n+l}(T) (f_x(T) x_{n+l})$ is
  decomposed. We let $n_1 > n_0$ be such that for all $x = (x_n)_{n
    \in \N}$ in $X' \setminus p X'$ we have $\ord(p^{l+\mu} x_n) > p$;
  here $n_0$ is the constant established in Fact \ref{dlam} with
  respect to the bound rank $d = \prk(L) + 1$.  For $n > n_1$ and $x
  \in X' \setminus ( D + p X')$, we have
\begin{eqnarray}
\label{strong}
p^l x_{n+l} - \iota_{n,n+l}(x_n) = f_x(T) h_n(T) x_{n+l} \in M_{n+l}.
\end{eqnarray} 
Indeed, consider the modules $B = \Lambda x_{n+1}/(f_x(T) \cdot
\Lambda x_{n+1})$ and $A = \Lambda x_n/ (f_x(T) \cdot \Lambda
x_n)$. Since $\iota_{n,n+1} x_{n} \not \in f_x(T) \Lambda x_{n+1}$ for
$n > n_1$ -- as follows from the condition imposed on the orders --
the induced map $\iota : A \ra B$ is rank preserving.  We can thus
apply the Lemma \ref{ab}, an $l$-fold iteration of which implies the
claim \rf{strong}.  We now apply the hypothesis that $p x = c + u \in
D$ for some $c \in L, u \in M[ p^{l-1} ]$; note that the condition
\rf{cstab} allows us to conclude from Lemma \ref{ab} and $c \in L$
that $\iota_{n,n+k}(c_n) = p^k c_{n+k}$ for all $c = (c_n)_{n \in \N}
\in L$ In particular,
\[ p \omega_{n} c_{n+1} = \omega_n \iota_{n,n+1}(c_n) = 0, \quad \hbox{so} \quad
\omega_{n+1} c_{n+1} \in L_{n+1}[ p ] = \iota_{1,n+1}(L_1[ p ]). 
\]
as a consequence of the same Lemma. We deduce under the above
hypothesis on $n$, that
\[ p^l x_{n+l} = p^{l-1} c_{n+l} = \iota_{n+1,n+l} c_{n+1} =
\iota_{n,n+l}(x_n) + h y_{n+l} . \] By applying $\omega_n$ to this
identity and using $\omega_n c_{n+1} \in \iota_{1,n+1}( L_1[ p ] )$,
so $T \omega_n c_{n+1} = 0$, we find $T h(T) \omega_n y_{n+l} =
0$. The Iwasawa's Theorem VI \cite{Iw} implies that there is some
$z(n) \in X'$ such that $T h \omega_n y = \nu_{n+l,1} z(n)$. Since
$p^l y = 0$, we have in addition $p^l \nu_{n+l,1} z(n) = 0$, so $p^l
z(n) = 0$ and $z(n) \in M$, by Fact \ref{nonu}.  We stress here the
dependency of $z \in X'$ on the choice of $n > n_1$ by writing $z(n)$;
this is however a norm coherent sequence and $z_m(n)$ will denote its
projection in $X'_m$ for all $m > 0$. We obtain $\nu_{n,1}( T^2 h(T) y
- \nu_{n+l,n} z(n) ) = 0$.  This implies $T^2 h(T) y = \nu_{n+l,n}
z(n)$, by the same Fact \ref{nonu}.  Reinserting this relation in the
initial identity, we find
  \begin{eqnarray}
  \label{cdeco}
   \iota_{n,n+l}( T^2 x_n - z_n(n)) = \iota_{n+1,n+l} (T^2 c_{n+1}). 
  \end{eqnarray} 
  
  We prove that \rf{cdeco} implies that $T^2 x$ must be decomposed.
  For this we invoke the Fact \ref{dlam} with respect to the sequence
  $w^{(n)} = T^2 x - z(n)$. We have $d_n( \iota_{n+1,n+l} (T^2
  c_{n+1}) ) \leq \prk(L)$ for all $n$; since $z(n) \in M$, and $T^2
  x$ is assumed indecomposed, then $w^{(n)} \not \in D$ either, and
  the Fact \ref{dlam} together with the choice of $n_1$ imply that
  $w_n^{(n)} \in \nu_{n,n_0} X'$ for all $n > n_1$. But then
\[ w_n^{(n)} = \iota_{n,n+l}( T^2 x_n -  z_n(n)) = \nu_{n,n_0} a_n  
\in \iota_{n_0,n}(X'_{n_0}) . \]
It follows in particular that 
\[ \ord(T^2 x_n - z_n(n)) \leq p^l \ord(\iota_{n,n+l}( T^2 x_n -
z_n(n))) \leq p^l \exp(X'_{n_0}) . \] This holds for arbitrary large
$n$, and since $z_n(n) \in M_n$, we have $\ord(T^2 x_n - z_n(n)) =
\ord(T^2 x_n)$ for $n > n_1$.  Therefore, the assumption that $T x
\not \in D$ implies that $\ord(T^2 x_n) \leq p^l \exp(X'_{n_0})$ for
all $n > n_1$: it is thus uniformly bounded for all $n$, which would
imply that $x \in M$, in contradiction with the assumption $x \not \in
D$. We have thus proved the claim for all $x \in X' \setminus (D + p
X')$. The general case follows by applying Nakayama to the module
$X'$.
 \end{proof}

 \textbf{Acknowledgments}: The main proof idea was first investigated
 in the wish to present the result in the memorial volume \cite{Alf}
 for Alf van der Poorten. However not more than the trivial case was
 correctly proved; S\"oren Kleine undertook within part of his PhD
 Thesis the task of brushing up the proof. He succeeded to do so, with
 the exception of the details related to decomposition and Lemma \ref{hord}, which we
 treated here. I owe to S\"oren Kleine for numerous useful discussions
 sustained during the development of this paper. I am particularly
 grateful to Andreas Nickel for critical discussions of an earlier
 version of this work. In G\"ottingen, the graduate students in winter
 term of 2015-2016, Pavel Coupek, Vlad Cri\c{s}an and Katharina
 M\"uller presented in a seminar this work, I owe them for their great
 contribution to the verification of the final redaction. The paper is dedicated to the memory of
 Alf van der Poorten.

\bibliographystyle{abbrv} 
\bibliography{muNull}
\end{document}